\numberwithin{equation}{section}
\newtheorem{theorem}{Theorem}[section]
\newtheorem{lemma}[theorem]{Lemma}
\newtheorem{corollary}[theorem]{Corollary}
\newtheorem{remark}[theorem]{Remark}
\begin{document}
\title[The $L_q$-Minkowski problem of anisotropic $p$-torsional rigidity]{The $L_q$-Minkowski problem of anisotropic $p$-torsional rigidity}

\author{Chao Li}
\address{Chao Li:
School of Mathematics and Statistics, Ningxia University, Yinchuan, Ningxia, 750021, China.}
\address{Ningxia Basic Science Research Center of Mathematics, Ningxia University, Yinchuan 750021, China.}
	\email{lichao@nxu.edu.cn, lichao166298@163.com}

\author{Bin Chen}
\address{Bin Chen: Department of Mathematics, Lanzhou University of Technology, Lanzhou, Gansu, 730050, China.} 
\email{chenb121223@163.com}

	\subjclass[2020]{35N25, 52A20, 31A15}
	
\keywords{anisotropic $p$-Laplacian operator, anisotropic $p$-torsional rigidity, $L_q$-Minkowski problem}

\begin{abstract}
In this paper, the $L_q$-Minkowski problem of anisotropic $p$-torsional rigidity is considered. The existence of the solution to the $L_q$-Minkowski problem for anisotropic $p$-torsional rigidity with $0<q<1$ and $1<q\neq n+\frac{p}{p-1}$ is given.
\end{abstract}

\maketitle

\vskip 20pt
\begin{sloppypar}
\section{Introduction and main results}

The Minkowski problem has long been a cornerstone of convex geometry, occupying a central role in fields such as nonlinear partial differential equations, differential geometry and optimal transport theory. The classical Minkowski problem is primarily concerned with the existence, uniqueness and regularity of solutions, to which seminal contributions have been made by scholars such as Aleksandrov, Fenchel and Jessen (see the Notes for Section 8.2 in \cite{SRA2014}). Since Lutwak \cite{LUTWA1993} introduced and studied the $L_q$-Minkowski problem, it has become evident that varying the parameter $q$ not only generalizes the classical formulation but also introduces substantial complexity in the analysis of uniqueness, existence and regularity when $q<1$. This development has attracted considerable attention and inspired a series of notable research advances.

In the study of the classical Minkowski problem, the various generalized formulations have attracted considerable academic interest and demonstrated substantial research value. Notable examples include the Orlicz-Minkowski problem introduced by Haberl, Lutwak, Yang and Zhang \cite{HLYZ2010}; the dual Minkowski problem investigated by Huang, Lutwak, Yang and Zhang \cite{HLYZ2016}; and the chord Minkowski problem explored by Lutwak, Xi, Yang and Zhang \cite{LXYZ2024}. Other important extensions involve the Minkowski problems related to electrostatic capacity \cite{JER1996, ZDX2020}, Gaussian measure \cite{HXZ2021, LJQ2022, FLX2025}, hyperbolic space \cite{LX2022}, the Gauss image measure \cite{BLYZ2020}, weighted settings \cite{KLL2023, LWG2023, LIV2019}, harmonic measure \cite{AKM2023, JER1989, JER1991, LCZX2024} and coconvex sets of finite volume \cite{SRA2018, LNYZ2024}.

Let $\mathscr{K}^n$ denote the set of convex bodies (compact, convex subsets with non-empty interiors) in the Euclidean space $\mathbb{R}^n$. Let $\mathscr{K}^n_o$ denote the set of convex bodies containing the origin in their interiors in $\mathbb{R}^n$.  Besides, let $\mathbb{S}^{n-1}$ denote the unit sphere, $\mathbb{B}^{n}$ denote the unit ball and $V(K)$ denote the $n$-dimensional volume of $K$.
In this paper, we further investigate a novel form of the Minkowski problem, namely the $L_q$-Minkowski problem of anisotropic $p$-torsional rigidity. This line of research follows the seminal works of Jerison \cite{JER1991} on the Minkowski problem for electrostatic capacity, Colesanti and Fimiani \cite{CAM2010} on the Minkowski problem for torsional rigidity, more recently, Langharst and Ulivelli \cite{LAU2024} on the log-Minkowski problem for torsional rigidity. Let $n \ge 2$ and $1 < p < \infty$ and let $F: \mathbb{R}^n \to \mathbb{R}$ be a positive, convex and $1$-homogeneous function. For a bounded open set $K \subset \mathbb{R}^n$, denote by $u \in W_0^{1,p}(K)$ the unique solution to the following boundary value problem (see \cite{PDG2014})
\begin{align*}
\left\{\begin{array}{lll}
\Delta_p^F u=-1 & \text { in } & K, \\
u=0 & \text { on } & \partial K,
\end{array}\right.
\end{align*}
where $\Delta_p^F$ is the anisotropic $p$-Laplacian operator given by
\begin{align}\label{Eq:APYZ}
\Delta_p^F u=\operatorname{div}\left(F^{p-1}(\nabla u) \nabla_{\xi} F(\nabla u)\right).
\end{align}
The anisotropic $p$-torsional rigidity $\tau_{F,p}(K) > 0$ of $K$ is defined as (see \cite{PDG2014})
\begin{align}\label{EQ:YXLG}
\tau_{F,p}(K)=\int_{K} F^p\left(\nabla u\right) d x=\int_{K} u d x.
\end{align}
From (\ref{EQ:YXLG}) and anisotropic Poho\v{z}aev identity \cite{BCG2018}, Li \cite{LCHAO2025} derived the following integral expression
\begin{gather*}
\tau_{F,p}(K)=\frac{p-1}{n(p-1)+p} \int_{\mathbb{S}^{n-1}} h_K(\xi) F^p(\nabla u(\mathbf{g}^{-1}_K(\xi)))d S(K,\xi),
\end{gather*}
where $h_K$ denotes the support function of  $K$, $\mathbf{g}_K: \partial^{\prime} K \rightarrow \mathbb{S}^{n-1}$ is the Gauss map of $K$, defined on $\partial^{\prime} K$ (the set of points of $\partial K$ that have a unique outer unit normal) and $S(K,\cdot)$ denotes the surface area measure of $K$. Subsequently, the anisotropic $p$-torsional measure $S_{F,p}(K,\cdot)$ of $K$ is defined as a Borel measure on the unit sphere $\mathbb{S}^{n-1}$ such that for any Borel set $\eta \subset \mathbb{S}^{n-1}$, it is given by
\begin{align*}
S_{F,p}(K,\eta)=\int_{x\in\mathbf{g}_K^{-1}(\eta)}F^p\left(\nabla u(x)\right) d\mathscr{H}^{n-1}(x),
\end{align*}
where $u \in W_0^{1, p}(K) \backslash\{0\}$ and $\mathscr{H}^{n-1}$ is $(n-1)$-dimensional Hausdorff measure. In recent years, substantial progress has been achieved in the study of the anisotropic $p$-torsional rigidity and the anisotropic $p$-Laplacian operator. For further details, we refer the reader to References \cite{BCG2018, PDG2014, FNS2018, GRAM024}.

Motivated by Lutwak's \cite{LUTWA1993} variational formula of the $L_q$-Minkowski problem, we naturally turn to its analogue in the setting of anisotropic $p$-torsional rigidity. In this paper, we establish a proof for the following variational formula (see Section \ref{sec22})
$$
\lim _{t \rightarrow 0} \frac{\tau_{F,p}\left([h_t]\right)-\tau_{F,p}(K)}{t}=\frac{1}{q}\int_{\mathbb{S}^{n-1}} f^q(v) d S_{F,p,q}(K,v),
$$ 
where $h_t(v)=(h_K(v)^q+t f(v)^q)^{\frac{1}{q}}$ for $q\neq 0$. This naturally leads to the definition of $L_q$ anisotropic $p$-torsional measure: For $1<p<\infty$ and $q\neq 0$, the $L_q$ anisotropic $p$-torsional measure  $S_{F,p,q}(K,\cdot)$ of $K$ is a Borel measure on the unit sphere $\mathbb{S}^{n-1}$ defined for a Borel set $\eta\subset \mathbb{S}^{n-1}$ by
\begin{gather*}
S_{F,p,q}(K,\eta)=\frac{p-1}{n(p-1)+p}\int_{x\in\mathbf{g}_K^{-1}(\eta)}\langle x, \mathbf{g}_K(x)\rangle^{1-q} F^p\left(\nabla u(x)\right) d\mathscr{H}^{n-1}(x).
\end{gather*} 

The $L_q$-Minkowski problem of the anisotropic $p$-torsional rigidity can be stated as follows:

\noindent{\bf The $L_q$-Minkowski problem of the anisotropic $p$-torsional rigidity:}~~{\it 
 Let $1<p<\infty$, $q\neq 0$ and $\mu$ be a Borel measure on $\mathbb{S}^{n-1}$. What are the necessary and sufficient conditions  on $\mu$, so that there exists a convex body $K \in \mathscr{K}_o^n$ such that
\begin{align}\label{Eq:LQAMW}
S_{F,p,q}(K, \cdot)=\mu ?
\end{align}}

Within the framework of the anisotropic $p$-Laplacian operator (\ref{Eq:APYZ}), this formulation unifies and generalizes several fundamental differential operators: specifically, it encompasses the anisotropic Laplacian, the isotropic $p$-Laplacian and the classical (i.e., linear) Laplacian as special cases. Regarding the $L_q$-Minkowski problem for anisotropic $p$-torsional rigidity (\ref{Eq:LQAMW}), the case $q = 1$ corresponds to the Minkowski problem for anisotropic $p$-torsional rigidity which has been studied by Li \cite{LCHAO2025}. When $q=0$, the problem transforms into the log-Minkowski problem of the anisotropic $p$-torsional rigidity \cite{LCHAO2025}. Moreover, when $F(\xi) = \sum_k|\xi_k|$, is exactly the $L_q$-Minkowski problem of the $p$-torsional rigidity (see \cite{CZZW2024,SXZ2021}). Meanehile, when $F(\xi) = \sum_k|\xi_k|$ and $p=2$, we encounter the $L_q$-Minkowski problem of the torsional rigidity (see \cite{CHD2020,HJR2021}). If $q=1$, $p=2$ and $F(\xi) = \sum_k|\xi_k|$, the problem simplifies to the most classical Minkowski problem of torsional rigidity \cite{COA2005,CAM2010}. In the specific case where $q=0$, $p=2$ and $F(\xi) = \sum_k|\xi_k|$, the log-Minkowski problem of torsional rigidity was investigated by Langharst and Ulivelli \cite{LAU2024} for the origin-symmetric convex bodies. Subsequently, Hu \cite{HJR20241} extended the result of Langharst and Ulivelli \cite{LAU2024} to the setting without symmetry assumptions. Recently, Zhao and Zhao \cite{ZXZP20261,ZXZP2026} researched the Orlicz–Minkowski problem for the $q$-torsional rigidity  and the $p$-th dual  Minkowski  problem  for the $k$-torsional rigidity corresponding to a $k$-Hessian equation via a curvature flow approach. For further exploration of the research on the Minkowski problem of the torsional rigidity, refer to references \cite{HJR2024,HJRM2022,HSX2018,LNYZ2020,SXZ2021}.

Firstly, the existence of the $L_q$-Minkowski problem of the anisotropic $p$-torsional rigidity with $q>1$ is established.  

\begin{theorem}\label{Thm:pdy1} 
Let $1<p<\infty$, $1<q\neq n+\frac{p}{p-1}$ and $\mu$ be a nonzero finite Borel measure on $\mathbb{S}^{n-1}$ that is not concentrated in any closed hemisphere, then there exists a convex body $K \in \mathscr{K}_o^n$ such that
\begin{align*}
S_{F,p,q}(K, \cdot)=\mu.
\end{align*}
\end{theorem}

\begin{remark}
In the special case $F(\xi) = \sum_k|\xi_k|$ and $p=2$, Theorem \ref{Thm:pdy1} coincides with the findings of Chen and Dai \cite{CHD2020}.
\end{remark}

Moreover, we establish the existence of the $L_q$-Minkowski problem to the anisotropic $p$-torsional rigidity with $0<q<1$ through the application of the polyhedral approximation methods.

\begin{theorem}\label{Thm:pdy0x1} 
Let $1<p<\infty$, $0<q<1$ and $\mu$ be a finite Borel measure on $\mathbb{S}^{n-1}$ which is not concentrated in any closed hemisphere. Then there exists a convex body $K$ in $\mathbb{R}^n$ such that
\begin{equation*}
d\mu=\lambda dS_{F,p,q}(K, \cdot),
\end{equation*}
where $\lambda$ is a positive constant depending on $p$, $q$, $n$ and $\mu$.
\end{theorem}

\begin{remark}
Taking $F(\xi) = \sum_k|\xi_k|$ in Theorem \ref{Thm:pdy0x1}, then it is exactly Chen, Zhao, Wang and Zhao's \cite[Theorem 1.6]{CZZW2024} result and if we add $p=2$ on this basis again, we get the result of Hu and Liu's \cite[Theorem 1]{HJR2021}.
\end{remark}

\section{Notations and Background Materials}\label{SEC2}

\subsection{Convex Geometry}
A convex body $K\in \mathscr{K}^n$ is uniquely determined by its support function $h(K, \cdot)=h_K:\mathbb{R}^n\rightarrow\mathbb{R}$, which is defined by
\begin{align}\label{SF}
h(K, x)=\max\{ x\cdot y : y\in K\}, \ \ \ \ x\in \mathbb{R}^n,
\end{align}
where $x\cdot y$ denotes the standard inner product of $x$ and $y$ in $\mathbb{R}^n$. It is also clear from the definition that $h(K,u) \leq h(L,u)$ for all  $u \in \mathbb{S}^{n-1}$ if and only if $K\subseteq L$.

For $q\neq0$ and $\alpha, \beta\geq0$ (not both zero) and $K,L\in\mathscr{K}^n_o$, the $L_q$-Minkowski combination \cite{LYZ2018} $\alpha\cdot K+_q\beta\cdot L$ of $K$ and $L$ is defined by 
\begin{align}\label{eq:qbd0}
\alpha\cdot K+_q\beta\cdot L=\bigcap_{y\neq 0}\{x\in\mathbb{R}^n:x\cdot y\leq \left(\alpha h_K(u)^q +\beta h_L(u)^q\right)^\frac{1}{q}\}.
\end{align}
For $q\geq1$, the support function of $L_q$-Minkowski combination satisfies (see \cite{SRA2014})
\begin{align}\label{lpmz}
h(\alpha\cdot K+_q\beta\cdot L,\cdot)^q=\alpha h(K,\cdot)^q+\beta h(L,\cdot)^q,
\end{align}
where $``+_q"$ denotes the $L_q$-sum and $\alpha\cdot K=\alpha^{1 / q} K$ is the $L_q$-Minkowski scalar multiplication. If $q=1$, (\ref{lpmz}) is just the classical Minkowski linear combination.

If $K$ is a compact star-shaped set (about the
origin) in $\mathbb{R}^n$, its radial function,
$\rho_K=\rho(K,\cdot):\mathbb{R}^n\setminus\{0\}\rightarrow[0,\infty)$ is defined by
\begin{align}\label{RF}
\rho(K,x)=\max\{c\geq0: c x\in K\}, \ \ x\in\mathbb{R}^n\setminus \{0\}.
\end{align}
If $\rho_K$ is positive and continuous, then $K$ is called a star body (with respect to the origin).

The radial map $r_K: \mathbb{S}^{n-1} \rightarrow \partial K$ is given by
$$
r_K(\xi)=\rho_K(\xi) \xi, \ \ \xi \in \mathbb{S}^{n-1},
$$
i.e., $r_K(\xi)$ is the unique point on $\partial K$ located on the ray parallel to $\xi$ and emanating from the origin.

Using this,  the outer unit normal vector to $\partial K$ at $x$, denoted by $\mathbf{g}(x)$, is well defined for $\mathscr{H}^{n-1}$ almost all $x \in \partial K$. The map $\mathbf{g}: \partial K \rightarrow \mathbb{S}^{n-1}$ is called the Gauss map of $K$. For $\omega \subset \mathbb{S}^{n-1}$, let
$$
\mathbf{g}^{-1}(\omega)=\{x \in \partial K: \mathbf{g}(x) \text { is defined and } \mathbf{g}(x) \in \omega\}.
$$

If $\omega$ is a Borel subset of $\mathbb{S}^{n-1}$, then $\mathbf{g}^{-1}(\omega)$ is $\mathscr{H}^{n-1}$-measurable. The Borel measure $S(K,\cdot)$, on $\mathbb{S}^{n-1}$, is defined for Borel $\omega \subset \mathbb{S}^{n-1}$ by
$$
S(K,\omega)=\mathscr{H}^{n-1}\left(\mathbf{g}^{-1}(\omega)\right)
$$
and is called the surface area measure of $K$. For every $f \in C\left(\mathbb{S}^{n-1}\right)$,
\begin{align}\label{Eq:gdfs}
\int_{\mathbb{S}^{n-1}} f(\xi) dS(K,\xi)=\int_{\partial K} f(\mathbf{g}(x)) d\mathscr{H}^{n-1}(x).
\end{align}

Let $C^+(\mathbb{S}^{n-1})$ denote a positive continuous function on $\mathbb{S}^{n-1}$ and let $E\in\mathbb{S}^{n-1}$  be a closed subset not contained in any closed hemisphere,  if $f\in C^+(\mathbb{S}^{n-1})$, the Wulff shape  (also known as the Aleksandrov body) $[f]$ associated with $f$ is defined as
\begin{align*}
[f]=\left\{x \in \mathbb{R}^{n}: x \cdot u \leq f(u) \text { for all } u \in E\right\}.
\end{align*}
Equivalently, $[f]$ is the unique maximal (with respect to set inclusion) element of
\begin{align}\label{Eq:ele}
\left\{K \in \mathscr{K}^n_{0}: h_{K}(u) \leq f(u), u \in E\right\}.
\end{align}
In fact,
\begin{align*}
[f]=\bigcap_{u \in E} H^{-}(u, f(u)),
\end{align*}
where
\begin{align*}
H^{-}(u, t)=\left\{x \in \mathbb{R}^{n}: x \cdot u \leq t\right\} .
\end{align*}

\begin{lemma}[Proposition 2.5 of \cite{KLL2023}]\label{lem:jxy1}
Let $K \in \mathscr{K}_o^n$, the Jacobian of $r_K: \mathbb{S}^{n-1} \rightarrow \partial K$ is $\frac{\rho_K^n(v)}{h_K(\mathbf{g}_K(r_K(v)))}$ up to a set of measure zero.
\end{lemma}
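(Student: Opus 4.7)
The plan is a direct local computation of the Jacobian via an orthonormal frame on $\mathbb{S}^{n-1}$. Since $K\in\mathscr{K}_0^n$, the radial function $\rho_K$ is positive and Lipschitz on $\mathbb{S}^{n-1}$, so by Rademacher's theorem it is differentiable $\mathscr{H}^{n-1}$-almost everywhere. The map $r_K$ is bi-Lipschitz from $\mathbb{S}^{n-1}$ onto $\partial K$, so the image of any null set is null; combined with Aleksandrov's theorem, which ensures that the outer unit normal $\mathbf{g}_K(x)$ exists at $\mathscr{H}^{n-1}$-a.e.\ $x\in\partial K$, this guarantees that the set of $v\in\mathbb{S}^{n-1}$ at which both $\rho_K$ is differentiable and $\mathbf{g}_K(r_K(v))$ is well-defined has full measure. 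The claimed formula needs only hold off a null set, so I restrict to such generic $v$ throughout.

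At a fixed generic $v$, pick an orthonormal basis $\{e_1,\dots,e_{n-1}\}$ of the tangent space $T_v\mathbb{S}^{n-1}$, and let $D_i$ denote the spherical derivative along $e_i$. Differentiating $r_K(v)=\rho_K(v)v$ gives
\[
dr_K(e_i)=(D_i\rho_K)\,v+\rho_K\,e_i,\qquad i=1,\dots,n-1.
\]
Using $\langle v,v\rangle=1$, $\langle v,e_j\rangle=0$, and $\langle e_i,e_j\rangle=\delta_{ij}$, the induced Gram matrix on $T_v\mathbb{S}^{n-1}$ takes the form $G_{ij}=\rho_K^2\delta_{ij}+(D_i\rho_K)(D_j\rho_K)$, and the matrix determinant lemma yields
\[
J(v)^2=\det G=\rho_K^{2(n-2)}\bigl(\rho_K^2+|\nabla_{\mathbb{S}}\rho_K|^2\bigr),
\]
where $\nabla_{\mathbb{S}}\rho_K$ denotes the spherical gradient.

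The second step is to relate $h_K(\mathbf{g}_K(r_K(v)))$ to the same quantities. Writing the outer unit normal as $\nu=\alpha v+w$ with $w\in T_v\mathbb{S}^{n-1}$, the orthogonality conditions $\langle\nu,dr_K(e_i)\rangle=0$ force $w=-(\alpha/\rho_K)\nabla_{\mathbb{S}}\rho_K$, and normalization fixes $\alpha=\rho_K/\sqrt{\rho_K^2+|\nabla_{\mathbb{S}}\rho_K|^2}$. Since $h_K(\nu)=\langle r_K(v),\nu\rangle=\rho_K\alpha$, this gives
\[
h_K(\mathbf{g}_K(r_K(v)))=\frac{\rho_K^2(v)}{\sqrt{\rho_K^2(v)+|\nabla_{\mathbb{S}}\rho_K(v)|^2}}.
\]
Substituting this into the expression for $J(v)$ collapses the square roots and produces exactly $J(v)=\rho_K^n(v)/h_K(\mathbf{g}_K(r_K(v)))$, as desired.

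The main obstacle is the a.e.\ justification of the pointwise identity together with positivity of the denominator; the latter follows because $0\in\mathrm{int}\,K$ forces $h_K\ge c>0$ on $\mathbb{S}^{n-1}$, so the right-hand side is finite and nonzero off a null set. As a conceptual cross-check, one could alternatively derive the formula by applying the divergence theorem to the radial vector field $x/n$ on the cone $C_\omega=\{tv:v\in\omega,\,0\le t\le\rho_K(v)\}$ for an arbitrary Borel $\omega\subset\mathbb{S}^{n-1}$: the lateral boundary contributes nothing because the field is tangential there, yielding $\int_\omega\rho_K^n(v)\,d\mathscr{H}^{n-1}(v)=\int_{r_K(\omega)}h_K(\mathbf{g}_K(x))\,d\mathscr{H}^{n-1}(x)$, and a change of variables via $r_K$ reads off the Jacobian at once.
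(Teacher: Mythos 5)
The paper never proves this lemma — it is imported as Proposition 2.5 of \cite{KLL2023} and used as a black box — so there is no internal argument to compare against; your computation is a correct, self-contained substitute. The local calculation checks out: at a point of differentiability, $dr_K(e_i)=(D_i\rho_K)v+\rho_K e_i$, the Gram determinant is $\rho_K^{2(n-2)}\bigl(\rho_K^2+|\nabla_{\mathbb{S}}\rho_K|^2\bigr)$ by the matrix determinant lemma, and combining with $h_K(\mathbf{g}_K(r_K(v)))=\rho_K^2/\sqrt{\rho_K^2+|\nabla_{\mathbb{S}}\rho_K|^2}$ indeed collapses the Jacobian to $\rho_K^n(v)/h_K(\mathbf{g}_K(r_K(v)))$. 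Your measure-zero bookkeeping (Rademacher for the Lipschitz function $\rho_K$, bi-Lipschitzness of $r_K$ so that null sets correspond to null sets, and $h_K\ge c>0$ because $o\in\operatorname{int}K$) is exactly what the ``up to a set of measure zero'' clause requires. The only step you state a bit tersely is the orthogonality $\langle\nu,dr_K(e_i)\rangle=0$: this is not automatic from the ansatz $\nu=\alpha v+w$ but follows because, for a curve $\gamma$ in $\mathbb{S}^{n-1}$ through $v$, the function $t\mapsto\langle\nu,r_K(\gamma(t))\rangle\le h_K(\nu)$ attains its maximum at $t=0$, so its derivative vanishes wherever $\rho_K$ is differentiable; adding one sentence to that effect would close the argument. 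Your divergence-theorem cross-check is also sound and arguably preferable, since it establishes $\int_\omega\rho_K^n\,d\mathscr{H}^{n-1}=\int_{r_K(\omega)}h_K(\mathbf{g}_K(x))\,d\mathscr{H}^{n-1}(x)$ for every Borel $\omega$ and then reads off the Jacobian from the area formula, bypassing pointwise differential geometry entirely; either route fully supports the way the lemma is invoked in Lemma \ref{lem:LPbfgs}.
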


\begin{lemma}[Lemma 2.1 of \cite{LJQ2022}]\label{lem:Lpbfyl}
 For $q \neq 0$, let $K \in \mathscr{K}_o^n$ and $f: \mathbb{S}^{n-1} \rightarrow \mathbb{R}$ be a continuous function. For small enough $\delta>0$ and each $t \in(-\delta, \delta)$, we define the continuous function $h_t: \mathbb{S}^{n-1} \rightarrow(0, \infty)$ as
$$
h_t(v)=\left(h_K(v)^q+t f(v)^q\right)^{\frac{1}{q}}, \quad v \in \mathbb{S}^{n-1}.
$$
Then 
$$
\lim _{t \rightarrow 0} \frac{\rho_{\left[h_t\right]}(v)-\rho_K(v)}{t}=\frac{f(\mathbf{g}_K(r_K(v)))^q}{q h_K(\mathbf{g}_K(r_K(v)))^q} \rho_K(v),
$$
holds for almost all $u \in \mathbb{S}^{n-1}$.  Moreover, there exists $M>0$ such that
$$
\left|\rho_{\left[h_t\right]}(v)-\rho_K(v)\right| \leq M|t|,
$$
for all $v \in \mathbb{S}^{n-1}$ and $t \in(-\delta, \delta)$.
\end{lemma}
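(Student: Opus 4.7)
The plan is to exploit the Wulff-shape description
$$
\rho_{[h_t]}(v)=\inf_{u\in\mathbb{S}^{n-1},\,u\cdot v>0}\frac{h_t(u)}{u\cdot v},
$$
and pinch the difference quotient $(\rho_{[h_t]}(v)-\rho_K(v))/t$ between matching one-sided bounds. Throughout I would work at a direction $v\in\mathbb{S}^{n-1}$ for which the radial boundary point $r_K(v)=\rho_K(v)v$ carries a unique outer unit normal $u^{\ast}:=\mathbf{g}_K(r_K(v))$; this holds $\mathscr{H}^{n-1}$-a.e.\ because $\partial K$ is differentiable a.e., and the exceptional set pulls back through the bi-Lipschitz radial map $r_K$ to a null set on the sphere.

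For the one-sided upper bound, I would test the above infimum with the single direction $u=u^{\ast}$ and use the support-function identity $h_K(u^{\ast})=\rho_K(v)(u^{\ast}\cdot v)$. A first-order Taylor expansion in $t$,
$$
h_t(u^{\ast})=\bigl(h_K(u^{\ast})^{q}+t\,f(u^{\ast})^{q}\bigr)^{1/q}=h_K(u^{\ast})+\frac{t}{q}\,f(u^{\ast})^{q}\,h_K(u^{\ast})^{1-q}+o(t),
$$
together with the substitution $u^{\ast}\cdot v=h_K(u^{\ast})/\rho_K(v)$, delivers the target expression $\frac{f(u^{\ast})^{q}\,\rho_K(v)}{q\,h_K(u^{\ast})^{q}}$ as the appropriate one-sided limit (the role of upper versus lower bound swaps according to the sign of $t$).

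For the matching bound in the opposite direction, I would pick, by compactness and continuity, a minimizer $u^{\ast}_{t}$ attaining $\rho_{[h_t]}(v)=h_t(u^{\ast}_{t})/(u^{\ast}_{t}\cdot v)$, and use $r_K(v)\in K$ to get $\rho_K(v)(u^{\ast}_{t}\cdot v)\leq h_K(u^{\ast}_{t})$, hence
$$
\rho_{[h_t]}(v)-\rho_K(v)\geq\frac{h_t(u^{\ast}_{t})-h_K(u^{\ast}_{t})}{u^{\ast}_{t}\cdot v}.
$$
The crucial step is to show $u^{\ast}_{t}\to u^{\ast}$ as $t\to 0$: any cluster point must minimize the limiting functional $u\mapsto h_K(u)/(u\cdot v)$, and for a.e.\ $v$ this minimizer is unique, being precisely $\mathbf{g}_K(r_K(v))$. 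Granting this, the expansion $h_t(u)-h_K(u)=(t/q)f(u)^{q}h_K(u)^{1-q}+o(t)$, which is uniform in $u\in\mathbb{S}^{n-1}$ by compactness and the lower bound on $h_K$, allows the limit to be taken and reproduces the same expression, completing the pinch.

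Finally, the uniform Lipschitz estimate $|\rho_{[h_t]}(v)-\rho_K(v)|\leq M|t|$ follows from the uniform first-order bound $|h_t(u)-h_K(u)|\leq C|t|$ on the compact sphere combined with the positive lower bound on $h_K$; passing to the infimum in the Wulff formula transfers this Lipschitz-in-$t$ behavior directly to the radial functions. The principal obstacle throughout is establishing the convergence $u^{\ast}_{t}\to u^{\ast}$, which hinges on a.e.\ uniqueness of the Gauss map at $r_K(v)$ and is precisely the reason the pointwise derivative formula can only be asserted almost everywhere.
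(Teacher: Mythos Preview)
The paper does not supply its own proof of this lemma: it is quoted as Lemma~2.1 of \cite{LJQ2022} and used as a black box in the proof of Lemma~\ref{lem:LPbfgs}. There is therefore no paper-side argument to compare your proposal against.

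That said, your outline is the standard route to this result and is correct in substance. The Wulff-shape representation $\rho_{[h_t]}(v)=\inf_{u\cdot v>0}h_t(u)/(u\cdot v)$, the pinching between the fixed test direction $u^{\ast}=\mathbf{g}_K(r_K(v))$ and the actual minimizer $u^{\ast}_{t}$, and the compactness argument forcing $u^{\ast}_{t}\to u^{\ast}$ at points of unique outer normal, are exactly the ingredients used in \cite{LJQ2022} (and, earlier, in \cite{HLYZ2016} for the logarithmic version). Your identification of the a.e.\ restriction as coming from uniqueness of the minimizer of $u\mapsto h_K(u)/(u\cdot v)$ is precisely the point. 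One small remark on the uniform Lipschitz bound: ``passing to the infimum'' is most cleanly justified by noting that $|h_t-h_K|\leq C|t|$ together with $h_K\geq c>0$ yields $(1-C'|t|)h_K\leq h_t\leq(1+C'|t|)h_K$, whence $(1-C'|t|)K\subseteq[h_t]\subseteq(1+C'|t|)K$ and the radial-function estimate is immediate; this avoids having to track the minimizers $u^{\ast}_{t}$ in the Lipschitz step.
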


\subsection{Anisotropic $p$-torsional rigidity}\label{sec22}

Let $F: \mathbb{R}^n \to[0,+\infty)$ be a function, such that

(i) $F$ is convex,

(ii) $F(\xi) \geq 0$ for $\xi \in \mathbb{R}^n$ and $F(\xi)=0$ if and only if $\xi=0$,

(iii) $F(t \xi)=|t| F(\xi)$ for $\xi \in \mathbb{R}^n$ and $t \in \mathbb{R}$.

Then $F$ is  a norm on  $\mathbb{R}^n$.  For $1<p<\infty$ and for some $\alpha \in(0,1)$,  define
$$
\mathscr{I}_p=\left\{F \in C^{2, \alpha}\left(\mathbb{R}^n \backslash\{0\}\right), \quad \frac{1}{p} F^p \in C_{+}^2\left(\mathbb{R}^n \backslash\{0\}\right)\right\},
$$
i.e., $F$ is a regular norm, where $F^p$ is a twice continuously differentiable function in $\mathbb{R}^n \backslash\{0\}$ whose Hessian matrix is strictly positive definite.

Let $1<p<\infty$ and $n \geq 2$, for a bounded open set $K \subset \mathbb{R}^n$, denote by $u \in W_0^{1,p}(K)$ the unique solution to the following boundary value problem (see \cite{PDG2014})
\begin{align}\label{Eq:patr}
\left\{\begin{array}{lll}
\Delta_p^F u=-1 & \text { in } & K, \\
u=0 & \text { on } & \partial K,
\end{array}\right.
\end{align}
where $\Delta_p^F$ is the Finsler $p$-Laplacian (also called anisotropic $p$-Laplacian) given by
\begin{align}\label{Eq:fsplls}
\Delta_p^F u=\operatorname{div}\left(F^{p-1}(\nabla u) \nabla_{\xi} F(\nabla u)\right).
\end{align}
Let $u$ be the unique solution of problem (\ref{Eq:patr}) in $K$, then the anisotropic $p$-torsional rigidity $\tau_{F,p}(K)$ of $K$ is defined by  (see \cite{LCHAO2025})
\begin{gather}\label{Eq:AFNZg}
\tau_{F,p}(K)=\frac{p-1}{n(p-1)+p} \int_{\mathbb{S}^{n-1}} h_K(\xi) F^p(\nabla u(\mathbf{g}^{-1}_K(\xi)))d S(K,\xi), 
\end{gather}
if $p=2$ and $F(\xi)=\sum_k\left|\xi_k\right|$ in (\ref{Eq:patr}) and (\ref{Eq:AFNZg}), then $\tau_{F,p}(K)$ becomes the classical torsional rigidity $\tau(K)$ of $K$ (see \cite{COA2005}).

Moreover, Li \cite[Proposition 3.6]{LCHAO2025} proved that the anisotropic $p$-torsional rigidity satisfies a property analogous to the classical torsional rigidity. Let $u$ be a solution of (\ref{Eq:patr}), $1<p<\infty$ and $K, L \in \mathscr{K}^n$. Then the anisotropic $p$-torsional rigidity $\tau_{F,p}(K)$ is positively homogeneous of order $n+\frac{p}{p-1}$, that is, 
\begin{align}\label{EQ:Tpybb}
\tau_{F,p}(\lambda K)=\lambda^{n+\frac{p}{p-1}}\tau_{F,p}(K).
\end{align}
The anisotropic $p$-torsional rigidity $\tau_{F,p}(K)$ is translation invariant, namely, 
\begin{align}
\label{Eq:Tpyb}\tau_{F,p}(K+x)=\tau_{F,p}(K), \quad x \in \mathbb{R}^n.
\end{align}
The anisotropic $p$-torsional measure $S_{F,p}(K,\cdot)$ is also translation invariant. 

If $K, L \in \mathscr{K}^n$ and $K \subseteq L$, then 
$$\tau_{F,p}(K) \leq \tau_{F,p}(L).$$

Let $K,K_i \subset \mathbb{R}^n$ be bounded domains with $C^{2, \alpha}$ and let  $F$ be a norm in $\mathscr{I}_p$. If $K_i\to K$ in Hausdorff metric as $i \rightarrow \infty$. Then (see \cite[Lemma 3.11 and Lemma 3.12]{LCHAO2025})
\begin{align}\label{EQ:RSN}
S_{F,p}(K_i, \cdot) \rightarrow S_{F,p}(K, \cdot), \quad {\rm weakly \ on  \quad \mathbb{S}^{n-1}}.
\end{align}
and $\tau_{F,p}$ is continuous, i.e.,
\begin{align}\label{EQ:TDRSN}
\tau_{F,p}(K_i) \rightarrow \tau_{F,p}(K).
\end{align}

In addition, Pietra and Gavitone \cite[Remark 2.1]{PDG2014} used the anisotropic P\'{o}lya-Szeg\"{o} inequality to characterized the upper bound estimate of the anisotropic $p$-torsional rigidity.
\begin{lemma}[\cite{PDG2014}]\label{lemuppb} 
Let $1<p<+\infty$ and $K$ be a bounded open set of $\mathbb{R}^n$. Then 
\begin{align}\label{Eq:sjgj}
\tau_{F,p}(K) \leq\frac{p-1}{n(p-1)+p}n^{-\frac{1}{p-1}}\kappa_n^{-\frac{p}{n(p-1)}}V(K)^{\frac{n(p-1)+p}{n(p-1)}},
\end{align}
where $\kappa_n=|\mathcal{W}|$ and $|\mathcal{W}|$ denotes the Lebesgue measure of $\mathcal{W}=\left\{\xi \in \mathbb{R}^n: F^o(\xi)<1\right\}$ and $F^o(\xi) = \sup_{\eta \neq 0} \frac{\eta \cdot\xi}{F(\eta)}$.
\end{lemma}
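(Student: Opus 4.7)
The plan is to combine the variational characterization of $\tau_{F,p}$ with the anisotropic P\'olya--Szeg\"o inequality and a closed-form computation on the Wulff shape, which is the standard route to Saint-Venant-type bounds.

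First, I would establish the Saint-Venant-type variational identity
\begin{align*}
\tau_{F,p}(K)=\sup\left\{\frac{\bigl(\int_K v\,dx\bigr)^{p/(p-1)}}{\bigl(\int_K F^p(\nabla v)\,dx\bigr)^{1/(p-1)}}\ :\ v\in W_0^{1,p}(K)\setminus\{0\}\right\}.
\end{align*}
This follows because $u$ solving (\ref{Eq:patr}) is the unique minimizer of the strictly convex energy $J(v)=\tfrac{1}{p}\int_K F^p(\nabla v)\,dx-\int_K v\,dx$, and the one-parameter minimization $\min_{t>0}J(tv)$ along each ray is explicit; combined with the integration-by-parts identity $\int_K F^p(\nabla u)\,dx=\int_K u\,dx=\tau_{F,p}(K)$, which uses Euler's identity for the $1$-homogeneous $F$, the supremum is attained at $u$ and equals $\tau_{F,p}(K)$.

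Next I would invoke the anisotropic P\'olya--Szeg\"o inequality (of Alvino--Ferone--Lions--Trombetti type). For $v\in W_0^{1,p}(K)$, let $v^{\star}$ be its convex symmetrization, a radially-$H^o$-symmetric function defined on the dilate $K^{\star}$ of $\mathcal{W}$ with $V(K^{\star})=V(K)$. Then $v$ and $v^{\star}$ are equimeasurable, so $\int_K v\,dx=\int_{K^{\star}}v^{\star}\,dx$, and
\begin{align*}
\int_K F^p(\nabla v)\,dx\geq\int_{K^{\star}}F^p(\nabla v^{\star})\,dx.
\end{align*}
Substituting into the variational formula yields $\tau_{F,p}(K)\leq\tau_{F,p}(K^{\star})$.

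Finally, I would compute $\tau_{F,p}(\mathcal{W})$ explicitly. The ansatz
\begin{align*}
u_0(x)=\frac{p-1}{p}\,n^{-1/(p-1)}\bigl(1-H^o(x)^{p/(p-1)}\bigr)
\end{align*}
solves $\Delta_p^F u_0=-1$ in $\mathcal{W}$ with zero boundary data; verification uses $F(\nabla H^o(x))=1$ for $x\ne 0$, the $0$-homogeneity of $\nabla_{\xi}F$, and the algebraic relation $(p/(p-1)-1)(p-1)=1$. Integration in anisotropic polar coordinates, $\int_{\mathcal{W}}g(H^o)\,dx=n\kappa_n\int_0^1 g(t)\,t^{n-1}\,dt$, then gives $\tau_{F,p}(\mathcal{W})=\tfrac{p-1}{n(p-1)+p}\,n^{-1/(p-1)}\kappa_n$. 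Applying homogeneity (\ref{EQ:Tpybb}) to $K^{\star}=(V(K)/\kappa_n)^{1/n}\mathcal{W}$ and simplifying the exponent produces the stated inequality. The main obstacle is the anisotropic P\'olya--Szeg\"o step itself, which rests on the anisotropic (Wulff) isoperimetric inequality applied level-set by level-set together with the coarea formula and a Hardy--Littlewood rearrangement argument; because the statement attributes this to Pietra--Gavitone, I would quote their inequality rather than reprove it, and the remaining ingredients (energy identity, Wulff computation, homogeneous rescaling) are routine verifications.
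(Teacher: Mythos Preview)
The paper does not prove this lemma at all: it is quoted verbatim from Della Pietra and Gavitone \cite{PDG2014}, with only the remark that those authors ``used the anisotropic P\'olya--Szeg\"o inequality'' to obtain it. Your sketch is therefore not a different route but a reconstruction of the very proof the paper is citing, and it is correct: the variational (Saint-Venant) characterization, the anisotropic P\'olya--Szeg\"o reduction to the Wulff shape $K^{\star}$, the explicit solution $u_0$ on $\mathcal{W}$, and the homogeneity rescaling all check out, including the final constants.
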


Now, we give the variational formula of the anisotropic $p$-torsional rigidity with respect to the $L_q$-Minkowski combination.

\begin{lemma}\label{lem:LPbfgs}
Let $1<p<\infty$, $q \neq 0$ and $K$ be a convex body containing the origin in its interior, such that $\partial K$ is smooth  up to set of $(n-1)$-dimensional Hausdorff measure zero and $f: \mathbb{S}^{n-1} \rightarrow \mathbb{R}$ be a continuous function. For sufficiently small $\delta>0$ and each $t \in(-\delta, \delta)$, the continuous function $h_t: \mathbb{S}^{n-1} \rightarrow(0, \infty)$ is defined by
$$h_t(v)=(h_K(v)^q+t f(v)^q)^{\frac{1}{q}}, \ \ \ v \in \mathbb{S}^{n-1},$$
then 
$$
\lim _{t \rightarrow 0} \frac{\tau_{F,p}\left([h_t]\right)-\tau_{F,p}(K)}{t}=\frac{1}{q}\int_{\mathbb{S}^{n-1}} f^q(v) d S_{F,p,q}(K,v).
$$
\end{lemma}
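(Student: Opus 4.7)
The plan is to reduce the $L_q$ variational formula to the classical (``$L_1$'') Hadamard-type variational formula for $\tau_{F,p}$ under support-function perturbations, which is established in \cite{LCHAO2025}, and then apply the chain rule in the parameter $t$. Define $\phi(v):=\tfrac{1}{q}\,f(v)^q\,h_K(v)^{1-q}$. Since $K\in\mathscr{K}_o^n$, $h_K$ is bounded below by a positive constant on $\mathbb{S}^{n-1}$, and since $f$ is continuous, a pointwise Taylor expansion of $s\mapsto(h_K(v)^q+sf(v)^q)^{1/q}$ at $s=0$ yields
$$h_t(v)=h_K(v)+t\,\phi(v)+o(t),$$
uniformly in $v\in\mathbb{S}^{n-1}$. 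In particular $h_t\to h_K$ uniformly, which implies $[h_t]\to K$ in the Hausdorff metric, and then \eqref{EQ:RSN} and \eqref{EQ:TDRSN} give the weak convergence $S_{F,p}([h_t],\cdot)\rightharpoonup S_{F,p}(K,\cdot)$ and the continuity $\tau_{F,p}([h_t])\to\tau_{F,p}(K)$.

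Next, invoking the Aleksandrov identity $h_{[h_t]}(v)=h_t(v)$ for $S_{F,p}([h_t],\cdot)$-almost every $v$ (every direction in the Gauss image of $[h_t]$ is a contact direction of the Wulff shape), and rewriting the density $F^p(\nabla u)\,dS_K$ as $dS_{F,p}$, the integral representation \eqref{Eq:AFNZg} takes the compact form
$$\tau_{F,p}([h_t])=\frac{p-1}{n(p-1)+p}\int_{\mathbb{S}^{n-1}}h_t(v)\,dS_{F,p}([h_t],v),$$
and similarly $\tau_{F,p}(K)=\frac{p-1}{n(p-1)+p}\int h_K\,dS_{F,p}(K,\cdot)$. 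Writing the difference quotient as
$$\frac{\tau_{F,p}([h_t])-\tau_{F,p}(K)}{t}=\frac{p-1}{n(p-1)+p}\int_{\mathbb{S}^{n-1}}\frac{h_t-h_K}{t}\,dS_{F,p}([h_t],\cdot)+R_t,$$
the uniform convergence of $(h_t-h_K)/t$ to $\phi$ together with the weak convergence of $S_{F,p}([h_t],\cdot)$ sends the first term to $\frac{p-1}{n(p-1)+p}\int\phi\,dS_{F,p}(K,\cdot)$. Substituting $\phi=\tfrac{1}{q}f^q h_K^{1-q}$ and using $\langle x,\mathbf{g}_K(x)\rangle=h_K(\mathbf{g}_K(x))$ for $x\in\partial K$ together with \eqref{Eq:gdfs} identifies this as $\tfrac{1}{q}\int_{\mathbb{S}^{n-1}}f^q\,dS_{F,p,q}(K,\cdot)$ by definition of the $L_q$ anisotropic $p$-torsional measure.

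The main obstacle is controlling the remainder $R_t=\tfrac{1}{t}\cdot\tfrac{p-1}{n(p-1)+p}\int h_K\,d(S_{F,p}([h_t],\cdot)-S_{F,p}(K,\cdot))$, which encodes the first-order variation of $S_{F,p}([h_t],\cdot)$ tested against $h_K$. The standard approach is an Aleksandrov-type sandwich: compare $[h_t]$ with the Wulff shape $K_t$ of the genuinely linear perturbation $h_K+t\phi$. Since $h_t$ and $h_K+t\phi$ differ by $o(t)$ uniformly, one obtains that the Hausdorff distance between $[h_t]$ and $K_t$ is $o(t)$, and combined with a modulus-of-continuity refinement of \eqref{EQ:TDRSN} this yields $\tau_{F,p}([h_t])-\tau_{F,p}(K_t)=o(t)$. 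The computation then reduces to the $L_1$ Hadamard formula $\frac{d}{ds}\big|_{s=0}\tau_{F,p}(K_s)=\frac{p-1}{n(p-1)+p}\int\phi\,dS_{F,p}(K,\cdot)$ of \cite{LCHAO2025}, with the remainder automatically absorbed. The delicate point is establishing the quantitative modulus of continuity of $\tau_{F,p}$ along Hausdorff-convergent families, which ultimately relies on stability estimates for the anisotropic boundary-value problem \eqref{Eq:patr}.
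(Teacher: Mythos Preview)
Your reduction-to-$L_1$ strategy is a legitimate alternative to the paper's approach, which instead works in polar coordinates: the paper writes $\tau_{F,p}([h_t])=\int_{\mathbb{S}^{n-1}}\int_0^{\rho_{[h_t]}(v)}F^p(\nabla u(\xi v))\xi^{n-1}\,d\xi\,dv$, invokes the radial-function variation of Lemma~\ref{lem:Lpbfyl}, and differentiates under the integral by dominated convergence. However, your decomposition via \eqref{Eq:AFNZg} contains a substantive error: the remainder $R_t=\frac{p-1}{t(n(p-1)+p)}\int h_K\,d(S_{F,p}([h_t],\cdot)-S_{F,p}(K,\cdot))$ does \emph{not} tend to zero. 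This is the same phenomenon as for volume, where $V(K)=\frac{1}{n}\int h_K\,dS_K$ yet the Aleksandrov formula reads $\frac{d}{dt}V([h_K+tf])=\int f\,dS_K$ without the $\frac{1}{n}$; the variation of the measure supplies the missing $\frac{n-1}{n}$. A homogeneity test with $f=h_K$ shows the same here, so your identification of the ``first term'' with the target $\frac{1}{q}\int f^q\,dS_{F,p,q}(K,\cdot)$ is wrong. The companion error is the spurious factor $\frac{p-1}{n(p-1)+p}$ you attach to the $L_1$ Hadamard formula: the correct version (Corollary~\ref{lembfgsn}, and presumably \cite{LCHAO2025}) is $\frac{d}{ds}\big|_{s=0}\tau_{F,p}([h_K+s\phi])=\int_{\mathbb{S}^{n-1}}\phi\,dS_{F,p}(K,\cdot)$, with no constant.

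Discard the decomposition entirely and keep only the sandwich. No ``quantitative modulus of continuity'' or PDE stability estimate is needed: for each $\epsilon>0$ and small $t>0$, the uniform Taylor expansion gives $h_K+t(\phi-\epsilon)\le h_t\le h_K+t(\phi+\epsilon)$, whence by monotonicity of the Wulff-shape map and of $\tau_{F,p}$ (both recorded in \S\ref{sec22}) one has $\tau_{F,p}([h_K+t(\phi-\epsilon)])\le\tau_{F,p}([h_t])\le\tau_{F,p}([h_K+t(\phi+\epsilon)])$. Applying the correct $L_1$ formula to the outer terms, sending $t\to 0^+$ (and analogously $t\to 0^-$), and then $\epsilon\downarrow 0$ yields $\frac{d}{dt}\big|_{t=0}\tau_{F,p}([h_t])=\int\phi\,dS_{F,p}(K,\cdot)=\frac{1}{q}\int f^q h_K^{1-q}\,dS_{F,p}(K,\cdot)$, which by \eqref{Eq:RDND} is $\frac{1}{q}\int f^q\,dS_{F,p,q}(K,\cdot)$. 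This route is arguably cleaner than the paper's direct computation but imports the $L_1$ formula as a black box from \cite{LCHAO2025}.
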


\begin{proof}
Let $h_t(v) = \left(h_K(v)^q + t f(v)^q\right)^{\frac{1}{q}}$. By invoking equation (\ref{EQ:YXLG}) and applying the polar coordinate transformation formula, we derive
$$
\tau_{F,p}\left(\left[h_t\right]\right)=\int_K F^p(\nabla u(x)) d x=\int_{\mathbb{S}^{n-1}} \int_0^{\rho_{\left[h_t\right]}(v)} F^p(\nabla u(\xi v)) \xi^{n-1} d \xi d v=\int_{\mathbb{S}^{n-1}} \phi_t(v) d v,
$$
where $\phi_t(v)=\int_0^{\rho_{\left[h_t\right]}(v)}F^p(\nabla u(\xi v)) \xi^{n-1} d \xi$. According to the Lebesgue differentiation theorem, it follows that
$\left.\frac{\mathrm{d} \phi_t}{ d t}\right|_{t=0}=\phi_0^{\prime}$ exists. Let $\mathcal{I}=\left[\rho_K(v), \rho_{\left[h_t\right]}(v)\right]$, for all $v \in \mathbb{S}^{n-1}$, note that $F^p(\nabla u(\xi v))$ exists by the assumption on $F$, by Lemma \ref{lem:Lpbfyl}, we obtain
\begin{align*}
\lim _{t \rightarrow 0} \frac{\phi_t(v)-\phi_0(v)}{t} & =\lim _{t \rightarrow 0} \frac{1}{t} \int_\mathcal{I} F^p(\nabla u(\xi v)) \xi^{n-1} d \xi \\
& =\lim _{t \rightarrow 0} \frac{\rho_{\left[h_t\right]}(v)-\rho_K(v)}{t} \frac{1}{|\mathcal{I}|} \int_\mathcal{I} F^p(\nabla u(\xi v)) \xi^{n-1} d \xi \\
& =F^p(\nabla u(r_K(v))) \frac{f(\mathbf{g}_K(r_K(v)))^q}{q h_K(\mathbf{g}_K(r_K(v)))^q} \rho_K^n(v).
\end{align*}
Since for some $M>0$,
$$
 \frac{f(\mathbf{g}_K(r_K(v)))^q}{q h_K(\mathbf{g}_K(r_K(v)))^q} \rho_K(v)=\lim _{t \rightarrow 0} \frac{\rho_{\left[h_t\right]}(v)-\rho_K(v)}{t} \leq M,
$$ 
from Lemma \ref{lem:Lpbfyl}, we can conclude that the derivative is dominated by the integrable function 
$$M\left(\max _{v \in \mathbb{S}^{n-1}} \rho_K(v)\right)^{n-1} F^p(\nabla u(r_K(v))).$$
By applying the dominated convergence theorem to differentiate under the integral sign and combining Lemma \ref{lem:jxy1} with (\ref{Eq:gdfs}), we obtain
\begin{align*}
\int_{\mathbb{S}^{n-1}} \phi_0^{\prime}(v) d u & =\int_{\mathbb{S}^{n-1}} F^p(\nabla u(r_K(v))) \frac{f(\mathbf{g}_K(r_K(v)))^q}{q h_K(\mathbf{g}_K(r_K(v)))^q} \rho_K^n(v) d v \\
& =\frac{1}{q}\int_{\partial K} F^p(\nabla u(x)) f^q(\mathbf{g}_K(x))h_K^{1-q}(\mathbf{g}_K(r_K(v)))d\mathscr{H}^{n-1}(x)\\
& =\frac{1}{q}\int_{\mathbb{S}^{n-1}} f^q(v) d S_{F,p,q}(K,v).
\end{align*}
This concludes the proof of Lemma \ref{lem:LPbfgs}.
\end{proof}

From Lemma \ref{lem:LPbfgs}, if $1<p<\infty$, $q\neq 0$ and $K$ is a convex body in $\mathbb{R}^n$ that contains the origin in its interior, then the $L_q$ anisotropic $p$-torsional measure, $S_{F,p,q}(K,\cdot)$, of $K$ is a Borel measure on the unit sphere $\mathbb{S}^{n-1}$ defined for a Borel set $\eta\subset \mathbb{S}^{n-1}$ by
\begin{align}\label{Eq:lpcd}
S_{F,p,q}(K,\eta)=\int_{x\in\mathbf{g}_K^{-1}(\eta)}\langle x, \mathbf{g}_K(x)\rangle^{1-q} F^p\left(\nabla u(x)\right) d\mathscr{H}^{n-1}(x).
\end{align}

Obviously, the $L_q$ anisotropic $p$-torsional measure $S_{F,p,q}(K,\cdot)$ is absolutely continuous with respect to anisotropic $p$-torsional measure $S_{F,p}(K,\cdot)$ and has Radon-Nikodym derivative
\begin{align}\label{Eq:RDND}
\frac{dS_{F,p,q}(K,\cdot)}{dS_{F,p}(K,\cdot)}=h^{1-q}(K,\cdot).
\end{align}

From that $K_i\rightarrow K$ in the Hausdorff metric and that the convergence of support functions is uniform, the continuity of the anisotropic $p$-torsional measure (\ref{EQ:RSN}) together with (\ref{Eq:RDND}) implies the weak continuity of the $L_q$ anisotropic $p$-torsional measure, i.e.,
\begin{align}\label{EQ:LQRSN}
S_{F,p,q}(K_i, \cdot) \rightarrow S_{F,p,q}(K, \cdot), \quad {\rm weakly \ on  \quad \mathbb{S}^{n-1}}.
\end{align}

If $q=1$ in Lemma \ref{lem:LPbfgs}, we can obtain the variational formula of the anisotropic $p$-torsional rigidity with respect to the Minkowski combination.
\begin{corollary}\label{lembfgsn}
Let $1<p<\infty$ and $K$ be a convex body in $\mathbb{R}^n$ containing the origin in its interior, such that $\partial K$  is smooth up to set of $(n-1)$-dimensional Hausdorff measure zero and $f: \mathbb{S}^{n-1} \rightarrow \mathbb{R}$ be a continuous function. For sufficiently small $\delta>0$ and each $t \in(-\delta, \delta)$, the continuous function $h_t: \mathbb{S}^{n-1} \rightarrow(0, \infty)$ is defined by
$$h_t(v)=h_K(v)+t f(v), \ \ \ v \in \mathbb{S}^{n-1},$$
then 
$$
\lim _{t \rightarrow 0} \frac{\tau_{F,p}\left([h_t]\right)-\tau_{F,p}(K)}{t}=\int_{\mathbb{S}^{n-1}} f(v) d S_{F,p}(K,v).
$$
\end{corollary}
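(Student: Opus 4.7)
The plan is to observe that the corollary is simply the special case $q=1$ of the preceding Lemma \ref{lem:LPbfgs}, so the proof reduces to checking that each ingredient of that lemma collapses correctly under this specialization.

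First I would verify that the hypotheses match. When $q=1$, the definition $h_t(v)=(h_K(v)^q+t f(v)^q)^{1/q}$ collapses exactly to $h_t(v)=h_K(v)+t f(v)$, which is the function appearing in the corollary. Because $K$ contains the origin in its interior, $h_K$ is bounded below by a positive constant, and $f$ is continuous on the compact sphere $\mathbb{S}^{n-1}$, so there exists $\delta>0$ such that $h_t(v)>0$ and continuous on $\mathbb{S}^{n-1}$ for every $t\in(-\delta,\delta)$. Thus all the hypotheses needed to invoke Lemma \ref{lem:LPbfgs} are in place.

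Next, I would apply Lemma \ref{lem:LPbfgs} with $q=1$ directly, obtaining
\begin{align*}
\lim_{t\to 0}\frac{\tau_{F,p}([h_t])-\tau_{F,p}(K)}{t}=\int_{\mathbb{S}^{n-1}} f(v)\,dS_{F,p,1}(K,v),
\end{align*}
since the prefactor $1/q$ becomes $1$ and $f^{q}(v)$ becomes $f(v)$. It then remains to identify the measure $S_{F,p,1}(K,\cdot)$ with the anisotropic $p$-torsional measure $S_{F,p}(K,\cdot)$. This is immediate from the Radon-Nikodym relation (\ref{Eq:RDND}): for $q=1$ the density $h_K^{1-q}$ equals $1$, so $S_{F,p,1}(K,\cdot)=S_{F,p}(K,\cdot)$, and substituting this into the previous display gives the desired formula.

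Since this is essentially a book-keeping specialization of Lemma \ref{lem:LPbfgs}, there is no substantive obstacle. The only points requiring care are the trivial but necessary verifications that $1/q=1$, $f^{q}=f$, and $h_K^{1-q}\equiv 1$ when $q=1$, together with checking that $h_t$ is positive and continuous for small $t$ so that Lemma \ref{lem:LPbfgs} legitimately applies.
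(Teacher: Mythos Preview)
Your proposal is correct and matches the paper's own approach: the paper simply states that the corollary follows by setting $q=1$ in Lemma \ref{lem:LPbfgs}, and you have spelled out exactly those bookkeeping checks (that $h_t$ specializes correctly, that $1/q=1$ and $f^q=f$, and that $S_{F,p,1}=S_{F,p}$ via (\ref{Eq:RDND})).
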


\section{A variational proof of the $L_q$-Minkowski problem for the anisotropic $p$-torsional measure}\label{SEC4}

Let $1<p<\infty$ and $q > 0$. For each non-zero finite Borel measure $\mu$ on $\mathbb{S}^{n-1}$, define the functional $\Psi_{F,p, q}: C^{+}(\mathbb{S}^{n-1}) \rightarrow \mathbb{R}^n$ by
$$
\Psi_{F,p, q}(f)=\frac{1}{q} \log \int_{\mathbb{S}^{n-1}} f(v)^q d\mu(v)-\frac{p-1}{n(p-1)+p} \log \tau_{F,p}([f]).
$$
A direct computation shows that $\Psi_{F,p, q}(f)$ is homogeneous of degree 0, i.e.,
$$
\Psi_{F,p, q}(t f)=\Psi_{F,p, q}(f), \quad \forall t>0, f \in C^{+}\left(\mathbb{S}^{n-1}\right).
$$
Note that for each $f \in C^{+}(\mathbb{S}^{n-1})$, from the definition of wulff shapes, we have $h_{[f]}\leq f$ and $[f]=[h_{[f]}]$, then
$$
\Psi_{F,p, q}(f) \geq \Psi_{F,p, q}(h_{[f]}).
$$

\begin{lemma}\label{Eq:lemazxzwt}
Let $1<p<\infty$, $1<q\neq n+\frac{p}{p-1}$ and $\mu$ be a nonzero finite Borel measure on $\mathbb{S}^{n-1}$ that is not concentrated in any closed hemisphere. If the minimization problem
\begin{align}\label{Eq:ZDXZ}
\inf \left\{\Psi_{F,p, q}(h):\quad \tau_{F,p}([h])=|\mu|,\quad h \in C^{+}(\mathbb{S}^{n-1})\right\},
\end{align}
has a solution $h_0 \in C^{+}(\mathbb{S}^{n-1})$, then there exists $K\in \mathscr{K}_o^n$ such that
\begin{align}
S_{F,p, q}\left(K, \cdot\right)=\mu.
\end{align}
\end{lemma}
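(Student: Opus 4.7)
The plan is to turn the minimality of $h_0$ into an Euler--Lagrange identity through the variational formula of Lemma \ref{lem:LPbfgs}, show that $\mu$ must be a positive multiple of $S_{F,p,q}(K_0,\cdot)$ for some $K_0 \in \mathscr{K}_o^n$, and finally absorb the multiplicative constant by a single homothety, a step that is legitimate precisely when $q \neq \frac{p}{p-1}+n$. First I would replace $h_0$ by the support function $h_{K_0}$ of $K_0 := [h_0]$. Since $[h_{K_0}] = [h_0] = K_0$, the support function $h_{K_0}$ satisfies the same constraint $\tau_{F,p}([h_{K_0}]) = |\mu|$, and the inequality $\Psi_{F,p,q}(h_0) \geq \Psi_{F,p,q}(h_{K_0})$ recorded just before the lemma shows that $h_{K_0}$ is itself a minimizer; the assumption that $\mu$ is not concentrated in any closed hemisphere ensures $K_0 \in \mathscr{K}_o^n$.

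For an arbitrary $f \in C(\mathbb{S}^{n-1})$ and $|t|<\delta$ I would then consider the admissible variation $h_t(v) = (h_{K_0}(v)^q + t f(v)^q)^{1/q}$. This path need not preserve the constraint $\tau_{F,p}([h]) = |\mu|$, but the degree-$0$ homogeneity of $\Psi_{F,p,q}$ combined with the degree-$(\tfrac{p}{p-1}+n)$ homogeneity (\ref{EQ:Tpybb}) of $\tau_{F,p}$ allows the rescaling $h_t \mapsto \lambda_t h_t$ with a suitable $\lambda_t>0$ to restore the constraint without changing the value of $\Psi_{F,p,q}$. Hence $t=0$ is a critical point of $t \mapsto \Psi_{F,p,q}(h_t)$; differentiating the two summands (dominated convergence on $\int h_t^q\,d\mu$ and Lemma \ref{lem:LPbfgs} on $\tau_{F,p}([h_t])$) and cancelling the common $\tfrac{1}{q}$ yields
\[
\frac{\int_{\mathbb{S}^{n-1}} f^q\,d\mu}{\int_{\mathbb{S}^{n-1}} h_{K_0}^q\,d\mu} = \frac{p-1}{(n(p-1)+p)\,\tau_{F,p}(K_0)} \int_{\mathbb{S}^{n-1}} f^q\,dS_{F,p,q}(K_0,\cdot).
\]

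Since $q \neq 0$, the map $f \mapsto f^q$ is a bijection of $C^{+}(\mathbb{S}^{n-1})$, so the identity above (valid on all positive continuous test functions, hence extended by Riesz representation) forces
\[
d\mu = \lambda\,dS_{F,p,q}(K_0,\cdot), \qquad \lambda := \frac{(p-1)\int h_{K_0}^q\,d\mu}{(n(p-1)+p)\,\tau_{F,p}(K_0)} > 0.
\]
To eliminate $\lambda$ I would rescale $K_0$ homothetically. A direct change of variables in the definition (\ref{Eq:lpcd}), using the standard scaling of the torsion solution, shows that $S_{F,p,q}$ is positively homogeneous of degree $\tfrac{p}{p-1}+n-q$. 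Choosing $K := cK_0$ with $c := \lambda^{1/(\frac{p}{p-1}+n-q)}$ is well defined precisely because the hypothesis excludes $q = \tfrac{p}{p-1}+n$, and then $S_{F,p,q}(K,\cdot) = c^{\frac{p}{p-1}+n-q}\,S_{F,p,q}(K_0,\cdot) = \lambda\,S_{F,p,q}(K_0,\cdot) = \mu$.

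The only delicate point is this final rescaling: the excluded value $q = \tfrac{p}{p-1}+n$ is exactly the one at which $S_{F,p,q}$ becomes scale invariant, so the constant $\lambda$ could not be absorbed and the conclusion would fail. Everything else reduces to a standard first-variation argument powered by Lemma \ref{lem:LPbfgs}.
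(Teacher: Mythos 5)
Your proposal is correct and follows essentially the same route as the paper: pass from $h_0$ to the support function $h_{K_0}$ of $K_0=[h_0]$, use the degree-$0$ homogeneity of $\Psi_{F,p,q}$ together with Lemma \ref{lem:LPbfgs} to get the Euler--Lagrange identity showing $d\mu=\lambda\, dS_{F,p,q}(K_0,\cdot)$, and absorb $\lambda$ by a homothety using the degree $\frac{p}{p-1}+n-q$ homogeneity of $S_{F,p,q}$, which is exactly where $q\neq\frac{p}{p-1}+n$ is needed. The only nitpick is attributional: the fact that $K_0\in\mathscr{K}_o^n$ follows from the strict positivity of $h_0$ (the Wulff shape of a positive continuous function contains a ball about the origin), not from the hypothesis that $\mu$ is not concentrated in a closed hemisphere, which enters elsewhere (e.g.\ in the existence of a minimizer).
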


\begin{proof}
Now, we may restrict our attention in the search of a minimizer to the set of all support functions. That is, $h_{K_0}$ is a minimizer to the optimization problem (\ref{Eq:ZDXZ}) if and only if $K_0$ is a minimizer to the following optimization problem:
\begin{align}\label{Eq:yhwrtu}
\inf \left\{\Psi_{F,p, q}(K_0):\quad \tau_{F,p}(K_0)=|\mu|,\quad K_0 \in \mathscr{K}_o^n\right\},
\end{align}
where $\Psi_{F,p,q}: \mathscr{K}_o^n \rightarrow \mathbb{R}$ is defined by
$$
\Psi_{F,p, q}(K_0)=\frac{1}{q} \log \int_{\mathbb{S}^{n-1}} h_{K_{0}}(v)^q d\mu(v)-\frac{p-1}{n(p-1)+p} \log \tau_{F,p}(K_0),
$$
for each $K_0 \in \mathscr{K}_o^n$. Clearly, the functional $\Psi_{F,p,q}$ is continuous with respect to the Hausdorff metric.

Suppose $K_0$ is a minimizer to (\ref{Eq:yhwrtu}), or equivalently $h_{K_0}$ is a minimizer to (\ref{Eq:ZDXZ}), i.e., $\tau_{F,p}(K_0)=|\mu|$ and
$$
\Psi_{F,p, q}(h_{K_0})=\inf \left\{\Psi_{F,p, q}(h): \tau_{F,p}([h])=|\mu|, h \in C^{+}(\mathbb{S}^{n-1})\right\},
$$
this yields that
$$
\Psi_{F,p, q}(h_{K_0}) \leq \Psi_{F,p, q}(h),
$$
for each $h \in C^{+}\left(\mathbb{S}^{n-1}\right)$. Let $f\in C^+(\mathbb{S}^{n-1})$ be an arbitrary continuous function. For $\delta>0$ small enough and $t \in(-\delta, \delta)$, define $h_t: \mathbb{S}^{n-1} \rightarrow \mathbb{R}$ by 
$$h_t(v)=(h_{K_0}(v)^q+t f(v)^q)^{\frac{1}{q}}.$$
Since $h_{K_0}$ is a minimizer of  (\ref{Eq:yhwrtu}) and by Lemma \ref{lem:LPbfgs}, we obtain
\begin{align*}
0 &= \frac{d}{dt} \bigg|_{t=0}\Psi_{F,p, q}(h_t)\\
&=\left.\frac{1}{q} \frac{ d }{ d t}\right|_{t=0}\left(\log \int_{\mathbb{S}^{n-1}} h_t(v)^q d \mu(v)\right)-\left.\frac{p-1}{n(p-1)+p}\frac{ d }{ d t}\right|_{t=0}\left(\log \tau_{F,p}([h_t])\right) \\
& =\frac{1}{q}\frac{\int_{\mathbb{S}^{n-1}} f^q(v) d \mu(v)}{\int_{\mathbb{S}^{n-1}} h^q_{K_0}d \mu(v)}-\frac{(p-1)}{q(n(p-1)+p) \tau_{F,p}([h_{K_0}])} \int_{\mathbb{S}^{n-1}} f^q(v) d S_{F,p,q}([h_{K_0}],v).
\end{align*}

Since $f\in C^+(\mathbb{S}^{n-1})$ is arbitrary and noting that $[h_{K_0}]=K_0$, we have 
$$
\frac{S_{F,p,q}(K_0, \cdot)}{|\mu|}=\frac{(n(p-1)+p)}{(p-1)\int_{\mathbb{S}^{n-1}} h_{K_0}^q d \mu} \mu(\cdot).
$$
Since measure $S_{F,p,q}(K_0, \cdot)$ is homogeneous of degree $n+\frac{p}{p-1}-q$ in $K_0$. Let 
$$K =\left(\frac{(p-1)\int_{\mathbb{S}^{n-1}} h_{K_0}^q d \mu(v)}{(n(p-1)+p)|\mu|}K_0\right)^{1/(n+\frac{p}{p-1}-q)},$$
then
$$
S_{F,p,q}\left(K, \cdot\right)=\mu.
$$
\end{proof}
 
\begin{theorem}\label{thm:elpmw} 
Let $1<p<\infty$, $1<q\neq n+\frac{p}{p-1}$ and $\mu$ be a nonzero finite Borel measure on $\mathbb{S}^{n-1}$ that is not concentrated on a great subsphere, then there exists a convex body $K \in \mathscr{K}_o^n$ such that
\begin{align*}
S_{F,p, q}(K, \cdot)=\mu.
\end{align*}
\end{theorem}

\begin{proof} 
Suppose $\{K_{0_i}\} \subset \mathscr{K}_o^n$ is a minimal sequence, i.e.,
\begin{align}\label{Eq:yhw66}
\inf \left\{\Psi_{F,p, q}(K_{0_i}):\quad \tau_{F,p}(K_{0_i})=|\mu|,\quad K_{0_i} \in \mathscr{K}_o^n\right\},
\end{align}
we claim that $K_{0_i}$ is uniformly bounded. If not, then there exists $u_i \in \mathbb{S}^{n-1}$ such that $\rho_{K_{0_i}}\left(u_i\right) \rightarrow \infty$ as $i \rightarrow \infty$. By the definition of support function, $\rho_{K_{0_i}}\left(u_i\right)\left(u_i \cdot v\right)_{+} \leq h_{K_{0_i}}(v)$, then
\begin{align*}
\Psi_{F,p, q}(K_{0_i})&=\frac{1}{q} \log \int_{\mathbb{S}^{n-1}} h_{K_{0_i}}(v)^q d\mu(v)-\frac{p-1}{n(p-1)+p} \log \tau_{F,p}(K_{0_i})\\
&\geq\frac{1}{q} \log\int_{\mathbb{S}^{n-1}} \rho_{K_{0_i}}\left(u_i\right)^q\left(u_i \cdot v\right)_{+}^q d \mu(v)-\frac{(p-1)\log(|\mu|)}{n(p-1)+p} \\
& =\frac{1}{q} \log\left(\rho_{K_{0_i}}(u_i)^q \int_{\mathbb{S}^{n-1}}\left(u_i \cdot v\right)_{+}^q d \mu(v)\right)-\frac{(p-1)\log(|\mu|)}{n(p-1)+p},
\end{align*}
where $(t)_{+}=\max \{t, 0\}$ for any $t \in \mathbb{R}$. Since $\mu$ is not concentrated on any closed hemisphere, there exists a positive constant $c_0$ such that
$$
\int_{\mathbb{S}^{n-1}}\left(u_i \cdot v\right)_{+}^q d \mu(v)>c_0.
$$
Therefore
$$
\Psi_{F,p, q}(K_{0_i})>\frac{1}{q} \log\left(\rho_{K_{0_i}}(u_i)^q c_0\right)-\frac{(p-1)\log(|\mu|)}{n(p-1)+p} \rightarrow \infty,
$$
as $i \rightarrow \infty$. But this contradicts (\ref{Eq:yhw66}). So we conclude that $K_{0_i}$ is uniformly bounded. By Blaschke selection theorem \cite[Theorem 1.8.7]{SRA2014}, there is a convergent subsequence of $K_{0_i}$, still denoted by $K_{0_i}$, converges to a compact convex set $K$ of $\mathbb{R}^n$. By the continuity of the anisotropic $p$-torsional rigidity, we get
$$
|\mu|=\lim _{i \rightarrow \infty} \tau_{F,p}\left(K_{0_i}\right)= \tau_{F,p}(K_0).
$$
From (\ref{Eq:sjgj}), we have 
\begin{align*}
|\mu|=\tau_{F,p}(K_0) \leq\frac{p-1}{n(p-1)+p}n^{-\frac{1}{p-1}}\kappa_n^{-\frac{p}{n(p-1)}}V(K_0)^{\frac{n(p-1)+p}{n(p-1)}}.
\end{align*}
Consequently, we have,
\begin{align*} 
V(K_0)\geq\left(|\mu|\frac{n(p-1)+p}{p-1}n^{\frac{1}{p-1}}\kappa_n^{\frac{p}{n(p-1)}}\right)^{\frac{n(p-1)}{n(p-1)+p}}>0.
\end{align*}
Thus $K_0$ is a convex body in $\mathbb{R}^n$. 

Therefore $K_0$ is a minimizer for the minimum problem (\ref{Eq:yhwrtu}), Because of the minimum problem (\ref{Eq:yhwrtu}) and (\ref{Eq:ZDXZ}) are equivalent. So $h_{K_0}$ is the solution to problem (\ref{Eq:ZDXZ}). Combining with Lemma \ref{Eq:lemazxzwt}, we can get the desired result.
\end{proof}

\section{The $L_q$-Minkowski problem of the anisotropic $p$-torsional rigidity for the general measure with $0<q<1$} 

Suppose that $\mu$ is a finite discrete measure on $\mathbb{S}^{n-1}$ which is not concentrated on any closed hemisphere of $\mathbb{S}^{n-1}$, that is 
$$
\mu=\sum_{k=1}^N \alpha_k \delta_{u_k},
$$
 where $\delta_{u_k}$ is Kronecker delta, $\alpha_1, \ldots, \alpha_N>0$ and $u_1, \ldots, u_N \in \mathbb{S}^{n-1}$ are not concentrated on any closed hemisphere. For $0<q<1$, we consider the following minimizing problem
\begin{align}\label{Eq:zxzwt}
\inf \left\{\sup _{\xi \in[f]_\mu} \Phi_{f, \mu}(\xi): f \in C^{+}\left(\mathbb{S}^{n-1}\right) \text { and } \tau_{F,p}([f]_\mu)=1\right\},
\end{align}
where $[f]_\mu$ denotes the Aleksandrov body associated to $(f, \operatorname{supp}(\mu))$ and $\Phi_{f, \mu}:[f]_\mu \rightarrow \mathbb{R}$ is defined by
\begin{align}\label{Eq:hsnhsu}
\Phi_{f, \mu}(\xi)=\int_{\mathbb{S}^{n-1}}(f(u)-\xi \cdot u)^q d \mu(u)=\int_{\operatorname{supp}(\mu)}(f(u)-\xi \cdot u)^q d \mu(u).
\end{align} 

In order to prove that this minimum problem is exactly the solution of the anisotropic $p$-torsional rigidity for the $L_q$-Minkowski problem, the following lemma is necessary. This lemma was proved by Jian and Lu \cite[Lemma 3.3]{JL2019} for the first time.

\begin{lemma}\label{lem:yjma1}\cite[Lemma 3.3]{JL2019}
If $0<q<1$. Let $\mu$ be a finite discrete Borel measure on $\mathbb{S}^{n-1}$ which is not concentrated in any closed hemisphere of $\mathbb{S}^{n-1}$. Then $\Phi_{f, \mu}(\xi)$ is strictly concave for every non-negative continuous function $f$ on $\mathbb{S}^{n-1}$ with $[f]_\mu$ has a nonempty interior, there is an unique point $\xi_{f}\in[f]_\mu$, such that
\begin{equation}
\Phi_{f, \mu}(\xi_{f})=\sup_{\xi \in [f]_\mu}\Phi_{f, \mu}(\xi),
\end{equation}
where $\xi_f$ depends continuously on $f$.
\end{lemma}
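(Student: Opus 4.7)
The plan is to decompose the statement into three claims---strict concavity of $\Phi_{f,\mu}$ on $[f]_\mu$, existence and uniqueness of the maximizer $\xi_f$, and continuous dependence of $\xi_f$ on $f$---and address them in that order. Since $\mu=\sum_{k=1}^N\alpha_k\delta_{u_k}$, one has explicitly
$$\Phi_{f,\mu}(\xi)=\sum_{k=1}^N\alpha_k\bigl(f(u_k)-\xi\cdot u_k\bigr)^q,$$
and on $[f]_\mu$ each bracket is nonnegative by the definition of the Aleksandrov body, so every term is well defined and $\Phi_{f,\mu}$ is continuous in $\xi$.

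For strict concavity I would use that $t\mapsto t^q$ is strictly concave on $[0,\infty)$ whenever $0<q<1$. Each summand $(f(u_k)-\xi\cdot u_k)^q$ is therefore a concave function of $\xi$, and is strictly concave along any line segment whose direction is not orthogonal to $u_k$. Because $\mu$ is not concentrated in any closed hemisphere of $\mathbb{S}^{n-1}$, the vectors $\{u_1,\dots,u_N\}$ positively span $\mathbb{R}^n$, so for any distinct $\xi_1,\xi_2\in[f]_\mu$ there exists at least one index $k$ with $(\xi_1-\xi_2)\cdot u_k\neq 0$. Combining concavity of every term with strict concavity of this particular term produces strict concavity of $\Phi_{f,\mu}$. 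The same hemisphere hypothesis also forces $[f]_\mu$ to be bounded, hence compact, and since $\Phi_{f,\mu}$ is continuous on this compact convex set of nonempty interior, a maximizer $\xi_f$ exists by compactness and is uniquely determined by strict concavity.

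For the continuous dependence I would take a sequence $f_j\to f$ uniformly in $C^+(\mathbb{S}^{n-1})$. A direct estimate on the half-space presentation
$$[f]_\mu=\bigcap_{k=1}^N\{x\in\mathbb{R}^n:x\cdot u_k\le f(u_k)\}$$
shows that $[f_j]_\mu\to[f]_\mu$ in the Hausdorff metric, so the sequence $\{\xi_{f_j}\}$ is confined to a common bounded set and admits a convergent subsequence $\xi_{f_{j_l}}\to\xi^\ast$. Joint continuity of $(\xi,f)\mapsto\Phi_{f,\mu}(\xi)$, combined with the inequalities $\Phi_{f_{j_l},\mu}(\xi_{f_{j_l}})\ge\Phi_{f_{j_l},\mu}(\zeta_{j_l})$ for any choice of $\zeta_{j_l}\in[f_{j_l}]_\mu$ approximating an arbitrary $\zeta\in[f]_\mu$, then forces $\xi^\ast$ to maximize $\Phi_{f,\mu}$, so $\xi^\ast=\xi_f$ by uniqueness. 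Since every convergent subsequence has the same limit $\xi_f$, the entire sequence converges.

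The main obstacle I anticipate is the last approximation step: to compare maximum values along the sequence one must produce points $\zeta_{j_l}\in[f_{j_l}]_\mu$ converging to an arbitrary $\zeta\in[f]_\mu$, which requires the Hausdorff continuity of the Aleksandrov body in $f$ and a careful treatment of boundary points where some $f(u_k)-\zeta\cdot u_k$ vanishes (there $t^q$ has infinite slope, which one must ensure does not spoil the limit). Once this approximation is available, the whole statement is a fairly standard compactness plus strict concavity argument.
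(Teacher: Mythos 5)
The paper does not actually prove this lemma: it is quoted with a citation to Jian and Lu \cite[Lemma 3.3]{JL2019}, so there is no in-paper argument to compare against. Judged on its own, your proposal is correct and is essentially the standard argument behind the cited result. Writing $\Phi_{f,\mu}(\xi)=\sum_k\alpha_k(f(u_k)-\xi\cdot u_k)^q$, strict concavity of $t\mapsto t^q$ on $[0,\infty)$ composed with the affine maps $\xi\mapsto f(u_k)-\xi\cdot u_k$, together with the observation that the hemisphere hypothesis forces some $u_k$ with $(\xi_1-\xi_2)\cdot u_k\neq 0$ for any distinct $\xi_1,\xi_2$, yields strict concavity on $[f]_\mu$; the same hypothesis makes $[f]_\mu$ bounded, hence compact, so the maximizer exists and is unique; and your subsequence argument gives the continuous dependence. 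Two remarks on the step you flag as the main obstacle: (i) the infinite slope of $t^q$ at $t=0$ is harmless, since passing to the limit in $\Phi_{f_{j_l},\mu}(\zeta_{j_l})\rightarrow\Phi_{f,\mu}(\zeta)$ only uses continuity (indeed H\"older continuity) of $t\mapsto (t)_{+}^q$, not differentiability; (ii) the approximants $\zeta_{j_l}\in[f_{j_l}]_\mu$ with $\zeta_{j_l}\rightarrow\zeta$ are immediate once $[f_{j_l}]_\mu\rightarrow[f]_\mu$ in the Hausdorff metric, and for a finite intersection of half-spaces this convergence follows from $f_j(u_k)\rightarrow f(u_k)$ plus the assumption that $[f]_\mu$ has nonempty interior; alternatively, you can bypass Hausdorff convergence by testing first with $\zeta\in\operatorname{int}[f]_\mu$ (then $\zeta\cdot u_k<f(u_k)$ gives $\zeta\in[f_j]_\mu$ for all large $j$) and then extending to all $\zeta\in[f]_\mu$ by continuity of $\Phi_{f,\mu}$. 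With these observations your outline closes into a complete, self-contained proof, which is more than the paper itself provides.
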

\begin{lemma}\label{lem:52a}
 Let $1<p<\infty$, $0<q<1$ and $\mu$ be a finite discrete measure on $\mathbb{S}^{n-1}$ which is not concentrated in any closed hemisphere of $\mathbb{S}^{n-1}$, then there exists a function $h \in C^{+}\left(\mathbb{S}^{n-1}\right)$ with $\xi_h=o$ and $\tau_{F,p}\left([h]_\mu\right)=1$ such that
$$
\Phi_{h, \mu}(o)=\inf \left\{\sup_{\xi \in[f]_\mu} \Phi_{f, \mu}(\xi): f \in C^{+}\left(\mathbb{S}^{n-1}\right) \text { and } \tau_{F,p}([f]_\mu)=1\right\}.
$$
\end{lemma}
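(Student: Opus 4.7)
The plan is to attack this via a minimizing sequence plus compactness. First I would take $\{f_i\}\subset C^{+}(\mathbb{S}^{n-1})$ with $\tau_{F,p}([f_i]_\mu)=1$ realizing the infimum. A preliminary reduction: letting $K_i:=[f_i]_\mu$ (a polytope, since $\mu$ is discrete), the inequality $h_{K_i}(u_k)\le f_i(u_k)$ and the identity $[h_{K_i}]_\mu=K_i$ imply $\Phi_{h_{K_i},\mu}\le\Phi_{f_i,\mu}$ on $K_i$, so replacing $f_i$ by $h_{K_i}$ preserves the constraint and does not increase the supremum. Thus I may assume $f_i=h_{K_i}$. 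Next, exploiting the translation invariance $\tau_{F,p}(K_i+t)=\tau_{F,p}(K_i)$ together with $\Phi_{h_{K+t},\mu}(\xi)=\Phi_{h_K,\mu}(\xi-t)$, I translate each $K_i$ so that the unique maximizer $\xi_{f_i}$ from Lemma~\ref{lem:yjma1} becomes $o$. Because $0<q<1$, the directional derivative of $\Phi_{h_{K_i},\mu}$ in any inward direction at a boundary face $\{f_i(u_k)-\xi\cdot u_k=0\}$ blows up to $+\infty$ (the exponent $q-1$ is negative); hence the maximizer is forced into the interior, so after translation $o\in\mathrm{int}\,K_i$ and $h_{K_i}\in C^{+}(\mathbb{S}^{n-1})$.

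The central estimate is uniform boundedness of $\{K_i\}$. Assume for contradiction that some $y_i\in K_i$ has $|y_i|\to\infty$, and write $v_i:=y_i/|y_i|\in\mathbb{S}^{n-1}$. Since $o,y_i\in K_i$, $h_{K_i}(u)\ge(y_i\cdot u)_+$, so
\begin{equation*}
\sup_{\xi\in K_i}\Phi_{h_{K_i},\mu}(\xi)=\Phi_{h_{K_i},\mu}(o)=\sum_{k=1}^N\alpha_k h_{K_i}(u_k)^q\ge|y_i|^q\sum_{k=1}^N\alpha_k(v_i\cdot u_k)_+^q.
\end{equation*}
Extracting a subsequence with $v_i\to v\in\mathbb{S}^{n-1}$, the non-concentration of $\mu$ in any closed hemisphere gives $\sum_k\alpha_k(v\cdot u_k)_+^q>0$, so the right-hand side diverges. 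This contradicts finiteness of the infimum, which I would verify directly by evaluating the functional on a small ball rescaled to have $\tau_{F,p}=1$.

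With uniform boundedness established, the Blaschke selection theorem yields a subsequence $K_i\to K$ in the Hausdorff metric. Continuity \eqref{EQ:TDRSN} gives $\tau_{F,p}(K)=1$, and the upper bound of Lemma~\ref{lemuppb} converts this into $V(K)>0$, so $K$ has nonempty interior. Uniform convergence $h_{K_i}\to h_K$ together with the continuous dependence of $\xi_f$ on $f$ (Lemma~\ref{lem:yjma1}) forces $\xi_{h_K}=o$, and the interior-maximizer argument above applied to $K$ shows $o\in\mathrm{int}\,K$, i.e.\ $h_K\in C^{+}(\mathbb{S}^{n-1})$. Since the maximizer is $o$ for every term of the sequence,
\begin{equation*}
\sup_{\xi\in K_i}\Phi_{h_{K_i},\mu}(\xi)=\Phi_{h_{K_i},\mu}(o)=\sum_{k=1}^N\alpha_k h_{K_i}(u_k)^q\longrightarrow\sum_{k=1}^N\alpha_k h_K(u_k)^q=\sup_{\xi\in K}\Phi_{h_K,\mu}(\xi),
\end{equation*}
so $h:=h_K$ attains the infimum.

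The main obstacle is the uniform boundedness step: it relies on the precise interplay between the normalization $\xi_{h_{K_i}}=o$ (which yields $h_{K_i}\ge 0$) and the hemisphere non-concentration of $\mu$, and if either ingredient is missing the minimizing sequence can escape to infinity. A secondary delicate point is justifying that the maximizer of $\Phi_{f,\mu}$ lies in the interior of $[f]_\mu$; this uses the regime $0<q<1$ in an essential way through the singular behavior of $(f(u_k)-\xi\cdot u_k)^{q-1}$ at the boundary, and is what guarantees both the admissibility of the translation normalization and the conclusion $h\in C^{+}(\mathbb{S}^{n-1})$.
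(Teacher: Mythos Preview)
Your proposal is correct and follows essentially the same route as the paper: reduce the minimizing sequence to support functions of the Aleksandrov bodies, translate so that $\xi_{h_{K_i}}=o$, prove uniform boundedness via the hemisphere non-concentration of $\mu$ (the paper picks a single $\zeta\in\operatorname{supp}(\mu)$ with $\zeta\cdot u_0>0$ rather than summing, but this is the same estimate), apply Blaschke selection, and use continuity of $\tau_{F,p}$ together with Lemma~\ref{lemuppb} to ensure the limit has nonempty interior, then invoke the continuous dependence of $\xi_f$ from Lemma~\ref{lem:yjma1} to conclude $\xi_h=o\in\operatorname{int}[h]_\mu$. Your explicit singular-derivative argument for why the maximizer lies in the interior is a nice addition; the paper simply defers this to the Jian--Lu lemma.
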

\begin{proof}
Let $\left\{f_k\right\} \subset C^{+}\left(\mathbb{S}^{n-1}\right)$, $\tau_{F,p}\left([f_k]_\mu\right)=1$ and
\begin{align}\label{Eq:jxazsdp}
\lim _{k \rightarrow+\infty} \Phi_{f_k, \mu}\left(\xi_{f_k}\right)=\inf \left\{\sup _{\xi \in[f]_\mu} \Phi_{f, \mu}(\xi): g \in C^{+}\left(\mathbb{S}^{n-1}\right) \text { and } \tau_{F,p}([f]_\mu)=1\right\}.
\end{align}
For simplicity, we write $h_k=h_{[f_k]_\mu}$, thus 
$$
h_k(u) \leq f_k(u)
$$
for $u \in \operatorname{supp}(\mu)$, then $[h_k]_\mu=\left[h_{[f_k]_\mu}\right]_\mu=[f_k]_\mu$. Since $o \in \operatorname{int}[f_k]_\mu$, we have $h_k \in C^{+}\left(\mathbb{S}^{n-1}\right)$. Thus for any $\xi \in[h_k]_\mu=[f_k]_\mu$, by (\ref{Eq:hsnhsu}), we get
\begin{align}\label{Eq:Wswbj}
\nonumber \Phi_{h_k, \mu}(\xi) & =\int_{\operatorname{supp}(\mu)}\left(h_k(u)-\xi \cdot u\right)^q d \mu(u) \\
\nonumber& \leq \int_{\operatorname{supp}(\mu)}\left(f_k(u)-\xi \cdot u\right)^q d \mu(u) \\
& =\Phi_{f_k, \mu}(\xi).
\end{align}
Thus by (\ref{Eq:Wswbj}), we obtain
$$
\sup _{\xi \in[h_k]_\mu} \Phi_{h_k, \mu}(\xi) \leq \sup _{\xi \in[f_k]_\mu} \Phi_{f_k, \mu}(\xi).
$$
Consequently,
\begin{align}\label{Eq:jxsda}
\lim _{k \rightarrow+\infty} \sup _{\xi \in[h_k]_\mu} \Phi_{h_k, \mu}(\xi) \leq \lim _{k \rightarrow+\infty} \sup _{\xi \in[f_k]_\mu} \Phi_{f_k, \mu}(\xi).
\end{align}
Due to $h_k \in C^{+}\left(\mathbb{S}^{n-1}\right)$ and $\tau_{F,p}\left([h_k]_\mu\right)=\tau_{F,p}\left([f_k]_\mu\right)=1$, by (\ref{Eq:jxazsdp}), we get
$$
\lim _{k \rightarrow+\infty} \sup _{\xi \in[h_k]_\mu} \Phi_{h_k, \mu}(\xi) \geq \lim _{k \rightarrow+\infty} \sup _{\xi \in[f_k]_\mu} \Phi_{f_k, \mu}(\xi),$$
from this and (\ref{Eq:jxsda}), we deduce
\begin{align}\label{Eq:dhjxs}
\lim _{k \rightarrow+\infty} \sup _{\xi \in[h_k]_\mu} \Phi_{h_k, \mu}(\xi)=\lim _{k \rightarrow+\infty} \sup _{\xi \in[f_k]_\mu} \Phi_{f_k, \mu}(\xi).
\end{align}
From Lemma \ref{lem:yjma1}, (\ref{Eq:dhjxs}) and (\ref{Eq:jxazsdp}), it follows that
\begin{align*}
\lim _{k \rightarrow+\infty}\Phi_{h_k, \mu}(\xi_{h_k})=&\lim _{k \rightarrow+\infty}\sup_{\xi \in [h_k]_\mu}\Phi_{h_k, \mu}(\xi)\\
=&\lim _{k \rightarrow+\infty} \sup _{\xi \in[f_k]_\mu} \Phi_{f_k, \mu}(\xi)\\
=&\inf \left\{\sup _{\xi \in[f]_\mu} \Phi_{f, \mu}(\xi): g \in C^{+}\left(\mathbb{S}^{n-1}\right) \text { and } \tau_{F,p}([f]_\mu)=1\right\}.
\end{align*}
In view of $[h_k]_\mu=[f_k]_\mu$, we obtain $h_k=h_{[f_k]_\mu}=h_{[h_k]_\mu}$, where $h_k$ is the support function of $[h_k]_\mu$. Since $\tau_{F,p}$ is translation invariant, then for $x \in \mathbb{R}^n$,  $\tau_{F,p}([h_k]_\mu+x)=\tau_{F,p}([h_k]_\mu)=1$. By (\ref{Eq:hsnhsu}), we derive for $x \in \mathbb{R}^n$,
\begin{align*}
\Phi_{h_{([h_k]_\mu+x)}, \mu}(\xi_{h_k}+x)
=\int_{\operatorname{supp}(\mu)}\left(h_{([h_k]_\mu+x)}(u)-(\xi_{h_k}+x) \cdot u\right)^q d \mu(u)
=\Phi_{h_k, \mu}(\xi_{h_k}).
\end{align*}

Therefore we can select a sequence, which will also be represented by$\left\{h_k\right\} \subset C^{+}\left(\mathbb{S}^{n-1}\right)$, with $\tau_{F,p}\left([h_k]_\mu\right)=1$ and $\xi_{h_k}=o$, such that
$$
\lim _{k \rightarrow+\infty} \Phi_{h_k, \mu}(o)=\inf \left\{\sup _{\xi \in[f]_\mu} \Phi_{f, \mu}(\xi): g \in C^{+}\left(\mathbb{S}^{n-1}\right) \text { and } \tau_{F,p}([f]_\mu)=1\right\}.
$$
We assert that the sequence $\left\{h_k\right\}$ is uniformly bounded on $\mathbb{S}^{n-1}$. Should this assertion not hold, it would be possible to extract a subsequence of $\left\{h_k\right\}$, which we shall also denote as $\left\{h_k\right\}$, such that
$$
\lim _{k \rightarrow+\infty} \sup _{u \in \mathbb{S}^{n-1}} h_k(u)=+\infty.
$$
Let $R_k=\sup _{u \in \mathbb{S}^{n-1}} h_k(u)=h_k\left(u_k\right)$ for some $u_k \in \mathbb{S}^{n-1}$. Given that $\left\{u_k\right\} \subset \mathbb{S}^{n-1}$,  it follows from the compactness of $\mathbb{S}^{n-1}$ that there exists a convergent subsequence, denoted by $\left\{u_k\right\}$, under the assumption that
$$
\lim _{k \rightarrow+\infty} u_k=u_0 \in \mathbb{S}^{n-1}.
$$
Since $\operatorname{supp}(\mu)$ is not concentrated on any closed hemisphere, there exist some $\zeta \in \operatorname{supp}(\mu)$ such that
$$
\zeta \cdot u_0>0.
$$
Define  $\gamma=\frac{1}{2}\left(\zeta \cdot u_0\right)>0$. It follows that there exists $k_0 \in \mathbb{N}$ such that for all $k \geq k_0$,
$$
\zeta \cdot u_k>\gamma.
$$
Note that $R_k u_k \in[h_k]_\mu$. As a result, for all $k \geq k_0$, it follows that
$$
h_k\left(\zeta\right) \geq R_k\left(\zeta \cdot u_k\right)>R_k \gamma.
$$
Given that $\mu$ is a finite discrete measure, it follows that for all $k \geq k_0$ and $0<q<1$, we obtain
\begin{align}\label{Eq:mdfaz}
\nonumber\lim _{k \rightarrow+\infty} \Phi_{h_k, \mu}(o) & =\lim _{k \rightarrow+\infty} \int_{\mathbb{S}^{n-1}} h_k^q(u) d \mu(u) \\
\nonumber& \geq \lim _{k \rightarrow+\infty} h_k^q\left(\zeta\right) \mu\left(\zeta\right) \\
& >\lim _{k \rightarrow+\infty}\left(R_k \gamma\right)^q \mu\left(\zeta\right)=+\infty.
\end{align}
Let $h^{\prime} \in C^{+}\left(\mathbb{S}^{n-1}\right)$ and $\tau_{F,p}([h^{\prime}]_\mu)=1$. Then 
$$
\lim _{k \rightarrow+\infty} \Phi_{h_k, \mu}(o) \leq \Phi_{h^{\prime}, \mu}\left(\xi_{h^{\prime}}\right)=\int_{\mathbb{S}^{n-1}}\left(h^{\prime}(u)-\xi_{h^{\prime}} \cdot u\right)^q d \mu(u)<+\infty,
$$
which contradicts to (\ref{Eq:mdfaz}). Thus $\left\{h_k\right\}$ is uniformly bounded.

 By the Blaschke Selection Theorem \cite[Theorem 1.8.7]{SRA2014}, then $\left\{h_k\right\}$ has a convergent subsequence, likewise denoted by $\left\{h_k\right\}$, letting $h_k \rightarrow h$ on $\mathbb{S}^{n-1}$ as $k \rightarrow+\infty$. This yields $h \geq 0,[h_k]_\mu \rightarrow[h]_\mu$ and $h=h_{[h]_\mu}$. The continuity of $\tau_{F,p}$ assures that $\tau_{F,p}([h]_\mu)=1$. 
 From (\ref{Eq:sjgj}), we have 
\begin{align*}
1=\tau_{F,p}([h]_\mu) \leq\frac{p-1}{n(p-1)+p}n^{-\frac{1}{p-1}}\kappa_n^{-\frac{p}{n(p-1)}}V([h]_\mu)^{\frac{n(p-1)+p}{n(p-1)}}.
\end{align*}
Consequently, we have
\begin{align*} 
V([h]_\mu)\geq\left(\frac{n(p-1)+p}{p-1}n^{\frac{1}{p-1}}\kappa_n^{\frac{p}{n(p-1)}}\right)^{\frac{n(p-1)}{n(p-1)+p}}>0.
\end{align*}
This indicates that $[h]_\mu$ is a convex body in $\mathbb{R}^n$ and $h=h_{[h]_\mu}$. Furthermore, it directly follows from Lemma \ref{lem:yjma1} that
$$
o=\lim _{k \rightarrow+\infty} \xi_{h_k}=\xi_h \in \operatorname{int}[h]_\mu.
$$
Namely, $h>0$. Consequently, based on (\ref{Eq:jxazsdp}), we obtain
$$
\Phi_{h, \mu}(o)=\inf \left\{\sup _{\xi \in[f]_\mu} \Phi_{f, \mu}(\xi): g \in C^{+}\left(\mathbb{S}^{n-1}\right) \text { and } \tau_{F,p}([f]_\mu)=1\right\}.
$$
This concludes the proof.
\end{proof}

\begin{lemma}\label{lem:k98ct}
 Let $1<p<\infty$, $0<q<1$ and $\mu$ be a finite discrete measure on $\mathbb{S}^{n-1}$ which is not concentrated in any closed hemisphere of $\mathbb{S}^{n-1}$, then there exists a function $h \in C^{+}\left(\mathbb{S}^{n-1}\right)$ and a constant $c>0$ such that
$$
d\mu=\lambda d S_{F,p,q}\left([h]_\mu, \cdot\right),
$$
where
$$
\lambda=\frac{p-1}{n(p-1)+p} \int_{\mathbb{S}^{n-1}} h^q(u) d \mu(u).
$$
\end{lemma}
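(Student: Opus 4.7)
The plan is to apply Lemma \ref{lem:52a} to produce a minimizer $h$ and then derive the desired relation as the Euler--Lagrange equation for this minimization. By Lemma \ref{lem:52a} one obtains $h \in C^{+}(\mathbb{S}^{n-1})$ with $\xi_h = o$ and $\tau_{F,p}([h]_\mu) = 1$; inspecting the proof, we may further take $h = h_{[h]_\mu}$ to be the support function of the polytope $P := [h]_\mu$.

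Next I would set up the one-parameter perturbation. Given an arbitrary $f \in C(\mathbb{S}^{n-1})$, define $h_t(v) = (h(v)^q + t f(v)^q)^{1/q}$ for $t$ in a small interval and rescale by $\lambda_t := \tau_{F,p}([h_t]_\mu)^{-(p-1)/(n(p-1)+p)}$ so that $\tilde h_t := \lambda_t h_t$ satisfies $\tau_{F,p}([\tilde h_t]_\mu) = 1$ by the homogeneity relation (\ref{EQ:Tpybb}). Lemma \ref{lem:yjma1} guarantees $\xi_{\tilde h_t} \to \xi_h = o$ as $t \to 0$. Because $\xi_{\tilde h_t}$ maximizes $\Phi_{\tilde h_t,\mu}(\cdot)$ and $\xi_h = o$ is an interior maximum at $t=0$, the envelope argument (the $\xi$-gradient vanishes to leading order) reduces the minimality condition to
$$0 = \frac{d}{dt}\bigg|_{t=0} \Phi_{\tilde h_t,\mu}(\xi_{\tilde h_t}) = \frac{d}{dt}\bigg|_{t=0} \int_{\mathbb{S}^{n-1}} \tilde h_t^q \, d\mu.$$
Writing $\tilde h_t^q = \lambda_t^q(h^q + t f^q)$ and differentiating at $t=0$ gives $0 = q\lambda_0' \int h^q\,d\mu + \int f^q\, d\mu$, so I would be left with determining $\lambda_0'$ from the variation of the torsional rigidity.

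Then I would compute $(d/dt)|_{t=0}\tau_{F,p}([h_t]_\mu)$. Since $\mu$ is discrete and $h$ is the support function of $P$, for all sufficiently small $t$ the Aleksandrov body $P_t := [h_t]_\mu$ is still a polytope with the same face normals $u_1, \ldots, u_N$ as $P$; its support function $h_{P_t}$ satisfies $h_{P_t}(u_k) = h_t(u_k)$ and depends smoothly on $t$. Applying Corollary \ref{lembfgsn} to the family $h_{P_t}$, and using that $S_{F,p}(P,\cdot) = \sum_k S_{F,p}(P,\{u_k\})\delta_{u_k}$ is concentrated on $\{u_k\}$, one obtains
$$\frac{d}{dt}\bigg|_{t=0} \tau_{F,p}(P_t) = \sum_k \frac{f(u_k)^q}{q\, h(u_k)^{q-1}} S_{F,p}(P,\{u_k\}) = \frac{1}{q}\int_{\mathbb{S}^{n-1}} f^q \, dS_{F,p,q}(P,\cdot),$$
the last identity being (\ref{Eq:RDND}). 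Hence $\lambda_0' = -\frac{p-1}{q(n(p-1)+p)}\int f^q\, dS_{F,p,q}(P,\cdot)$. Substituting back and invoking arbitrariness of $f$, one concludes $d\mu = \lambda\, dS_{F,p,q}(P,\cdot)$ with the prescribed $\lambda$.

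The hard part will be the variational identity in the preceding paragraph. Lemma \ref{lem:LPbfgs} is stated for the Wulff shape $[h_t]$ on all of $\mathbb{S}^{n-1}$, which in general is strictly smaller than the Aleksandrov body $[h_t]_\mu$ used here. The argument has to exploit that, for a discrete $\mu$ and a polytope $P = [h]_\mu$, small perturbations preserve the combinatorial structure of $P$, so $P_t = [h_t]_\mu$ varies by moving only its $N$ faces, and one can invoke the Minkowski-type variational formula (Corollary \ref{lembfgsn}) applied to the polytope support functions $h_{P_t}$ themselves rather than to $h_t$. This reduction---from $L_q$ variation of arbitrary test functions to a classical Minkowski variation of polytope support functions---is the essential technical point that must be handled carefully.
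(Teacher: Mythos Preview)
Your proposal is correct and follows the same Euler--Lagrange strategy as the paper, but with two technical differences worth noting. First, the paper uses the \emph{linear} perturbation $\vartheta_t=h+tf$ rather than your $L_q$ perturbation $h_t=(h^q+tf^q)^{1/q}$; this produces the variation in terms of $S_{F,p}$ and the factor $h^{q-1}$ appears only at the end when differentiating $\int(\psi_t-\xi(t)\cdot u)^q\,d\mu$, whereas your choice lands directly on $S_{F,p,q}$. Second, and more substantively, the paper does \emph{not} invoke the envelope theorem: it proves that $\xi'(0)$ exists by applying the implicit function theorem to the first-order system $\int(\psi_t-\xi\cdot u)^{q-1}u_k\,d\mu=0$, checking that the Jacobian $(1-q)\int h^{q-2}uu^{T}\,d\mu$ is positive definite (this uses that $\operatorname{supp}(\mu)$ spans $\mathbb{R}^n$), and then eliminates the $\xi'(0)$ contribution via the identity $\int h^{q-1}u\,d\mu=0$. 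Your envelope argument bypasses this entire implicit-function step and is legitimate here (the squeeze $\Phi(t,\xi(0))\le G(t)\le \Phi(t,\xi(t))$ combined with continuity of $\xi(t)$ from Lemma~\ref{lem:yjma1} gives $G'(0)=\partial_t\Phi(0,o)$ without differentiability of $\xi$), so your route is genuinely shorter. Finally, your discussion of the discrepancy between $[h_t]_\mu$ and the full Wulff shape $[h_t]$, and its resolution via combinatorial stability of the polytope, is more careful than the paper's own treatment, which simply cites Corollary~\ref{lembfgsn} for $[\vartheta_t]_\mu$ without comment.
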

\begin{proof}
By Lemma \ref{lem:yjma1}, we infer the existence of a function $h \in C^{+}\left(\mathbb{S}^{n-1}\right)$ with $\xi_h=o$ and $\tau_{F,p}\left([h]_\mu\right)=1$ such that
$$
\Phi_{h, \mu}(o)=\inf \left\{\sup _{\xi \in[f]_\mu} \Phi_{f, \mu}(\xi): f \in C^{+}\left(\mathbb{S}^{n-1}\right) \text { and } \tau_{F,p}([f]_\mu)=1\right\}.
$$
Let $\varepsilon>0$ be sufficiently small, for any $f \in C^+(\mathbb{S}^{n-1})$ and $t \in(-\varepsilon, \varepsilon)$, define
$$
\vartheta_t=h+t f,
$$
ensuring that $\vartheta_t \in C^{+}\left(\mathbb{S}^{n-1}\right)$. From Corollary \ref{lembfgsn}, it follows that
\begin{align}\label{Eq:bfgsq}
\nonumber\lim _{t \rightarrow 0} \frac{\tau_{F,p}([\vartheta_t]_\mu)-\tau_{F,p}([h]_\mu)}{t}
= &\int_{\mathbb{S}^{n-1}} f(u) d S_{F,p}([h]_\mu, u)\\ =&\int_{\operatorname{supp}(\mu)} f(u) d S_{F,p}([h]_\mu, u).
\end{align}

Let $\psi_t=\varphi (t) \vartheta_t$, where
$$
 \varphi(t)=\tau_{F,p}([\vartheta_t]_\mu)^{-{\frac{p-1}{n(p-1)+p}}}.
$$
Obviously, $\psi_t \in C^{+}(\mathbb{S}^{n-1})$, $\tau_{F,p}([\psi_t]_\mu)=1$ and $\psi_0=h$. By (\ref{Eq:bfgsq}), we deduce
\begin{align}\label{Eqtcra}
\nonumber\lim _{t \rightarrow 0} \frac{\psi_t-\psi_0}{t}=&-h(u)\frac{p-1}{n(p-1)+p}\tau_{F,p}([h]_\mu)^{-{\frac{p-1}{n(p-1)+p}}-1}\int_{\mathbb{S}^{n-1}} f(u) d S_{F,p}([h]_\mu, u)+f\\
=&-h(u)\frac{p-1}{n(p-1)+p}\int_{\mathbb{S}^{n-1}} f(u) d S_{F,p}([h]_\mu, u)+f.
\end{align}
Let $\xi(t)=\xi_{\psi_t}$ and
\begin{align}\label{Eq:pduitr5}
\nonumber\Phi_\mu(t) & =\sup _{\xi \in\left[\psi_t\right]_\mu} \int_{\mathbb{S}^{n-1}}\left(\psi_t(u)-\xi \cdot u\right)^q d \mu(u) \\
& =\int_{\mathbb{S}^{n-1}}\left(\psi_t(u)-\xi(t) \cdot u\right)^q d \mu(u).
\end{align}
Since $\xi(t) \in \operatorname{int}\left[\psi_t\right]_\mu$,  by (\ref{Eq:pduitr5}), we obtain
\begin{align}\label{Eq:p09uy}
\int_{\mathbb{S}^{n-1}}\left(\psi_t(u)-\xi(t) \cdot u\right)^{q-1} u_k d \mu(u)=0,
\end{align}
for $k=1, \ldots, n$, where $u=(u_1, \ldots, u_n)^T$. Noting $\xi(0)=\xi_h=o$ and taking $t=0$ in (\ref{Eq:p09uy}), we get
\begin{align}\label{Eq:9idq}
\int_{\mathbb{S}^{n-1}} h^{q-1}(u) u_k d \mu(u)=0,
\end{align}
for $k=1, \ldots, n$. Hence
\begin{align}\label{Eq:pcdw}
\int_{\mathbb{S}^{n-1}} h^{q-1}(u) u d \mu(u)=0.
\end{align}
Let
$$
H_k\left(t, \xi_1, \ldots, \xi_n\right)=\int_{\mathbb{S}^{n-1}}\left(\psi_t(u)-\left(\xi_1 u_1+\cdots+\xi_n u_n\right)\right)^{q-1} u_k d \mu(u),
$$
for $k=1, \ldots, n$. Then 
$$
\frac{\partial H_k}{\partial \xi_l}=(1-q) \int_{\mathbb{S}^{n-1}}\left(\psi_t(u)-\left(\xi_1 u_1+\cdots+\xi_n u_n\right)\right)^{q-2} u_k u_l d \mu(u).
$$
Let $H=\left(H_1, \ldots, H_n\right)$ and $\xi=\left(\xi_1, \ldots, \xi_n\right)$. Thus
$$
\left(\left.\frac{\partial H}{\partial \xi}\right|_{(0, \ldots, 0)}\right)_{n \times n}=(1-q) \int_{\mathbb{S}^{n-1}} h^{q-2}(u) u u^T d \mu(u),
$$
where $u u^T$ is a $n \times n$ matrix.

Since $\mu$ is not concentrated on any closed hemisphere, $\operatorname{supp}(\mu)$ spans the whole space $\mathbb{R}^n$. Consequently, for any $x \in \mathbb{R}^n$ with $x \neq 0$, there exists a $u_{i_0} \in \operatorname{supp}(\mu)$ such that $u_{k_0} \cdot x \neq 0$. Therefore for $0<q<1$, we derive that
$$
\begin{aligned}
x^T\left(\left.\frac{\partial H}{\partial \xi}\right|_{(0, \ldots, 0)}\right) x 
= & x^T\left((1-q) \int_{\mathbb{S}^{n-1}} h^{q-2}(u) u u^T d \mu(u)\right) x \\
= & (1-q) \int_{\mathbb{S}^{n-1}} h^{q-2}(u)(x \cdot u)^2 d \mu(u) \\
\geq & (1-q) h^{q-2}\left(u_{k_0}\right)\left(x \cdot u_{k_0}\right)^2 \mu\left(u_{k_0}\right)>0.
\end{aligned}
$$
This indicates that $\left(\left.\frac{\partial H}{\partial \xi}\right|_{(0, \ldots, 0)}\right)$ is positive definite, which consequently implies that
$$
\operatorname{det}\left(\left.\frac{\partial H}{\partial \xi}\right|_{(0, \ldots, 0)}\right) \neq 0.
$$
From this, the facts that for $k=1, \ldots, n, H_k(0, \ldots, 0)=0$ follows by the equation (\ref{Eq:9idq}) and $\frac{\partial H_k}{\partial \xi_l}$ is continuous on a neighborhood of $(0, \ldots, 0)$ for all $1 \leq k, l \leq n$ and the implicit function theorem, we can conclude that
$$
\xi^{\prime}(0)=\left(\xi_1^{\prime}(0), \ldots, \xi_n^{\prime}(0)\right)
$$
exists.

Note that $\Phi_\mu(0)=\Phi_{h, \mu}(o)$ and $\Phi_\mu(t)=\Phi_{\psi_t, \mu}\left(\xi_{\psi_t}\right)$ with $\psi_t \in C^{+}\left(\mathbb{S}^{n-1}\right)$ and $\tau_{F,p}([\psi_t]_\mu)=1$. By Lemma \ref{lem:52a}, it follows that
$$
\Phi_\mu(t) \geq \Phi_\mu(0),
$$
i.e., $\Phi_\mu(0)$ is an extreme value of $\Phi_\mu(t)$. Therefore from (\ref{Eqtcra}) and (\ref{Eq:pcdw}), we obtain
\begin{align*}
0= & \frac{1}{q} \frac{d}{dt} \bigg|_{t=0}\Phi_\mu(t) \\
= & \int_{\mathbb{S}^{n-1}} h^{q-1}(u)\left(-h(u)\frac{p-1}{n(p-1)+p}\int_{\mathbb{S}^{n-1}} f(u) d S_{F,p}([h]_\mu, u)+f(u)-\xi^{\prime}(0) \cdot u\right) d \mu(u) \\
= & -\frac{p-1}{n(p-1)+p} \int_{\mathbb{S}^{n-1}} h^q(u) d \mu(u) \int_{\mathbb{S}^{n-1}} f(u) d S_{F,p}\left([h]_\mu, u\right)+\int_{\mathbb{S}^{n-1}} h^{q-1}(u) f(u) d \mu(u) \\
& -\int_{\mathbb{S}^{n-1}} \xi^{\prime}(0) \cdot h^{q-1}(u) u d \mu(u) \\
= & \int_{\mathbb{S}^{n-1}} h^{q-1}(u) f(u) d \mu(u)-\frac{p-1}{n(p-1)+p} \int_{\mathbb{S}^{n-1}} h^q(u) d \mu(u) \int_{\mathbb{S}^{n-1}} f(u) d S_{F,p}([h]_\mu, u).
\end{align*}
Namely, for all $f \in C^+\left(\mathbb{S}^{n-1}\right)$, 
$$
\int_{\mathbb{S}^{n-1}} h^{q-1}(u) f(u) d \mu(u)=\frac{p-1}{n(p-1)+p} \int_{\mathbb{S}^{n-1}} h^q(u) d \mu(u) \int_{\mathbb{S}^{n-1}} f(u) d S_{F,p}([h]_\mu, u).
$$
Since $h=h_{[h]_\mu}$, then
$$
d \mu(u)=\lambda d S_{F,p,q}\left([h]_\mu, u\right),
$$
where 
$$
\lambda=\frac{p-1}{n(p-1)+p} \int_{\mathbb{S}^{n-1}} h^q(u) d \mu(u).
$$
\end{proof}

\begin{theorem}
Let $1<p<\infty$, $0<q<1$ and $\mu$ be a finite Borel measure on $\mathbb{S}^{n-1}$ which is not concentrated in any closed hemisphere. Then there exists a convex body $K$ in $\mathbb{R}^n$, such that
\begin{equation*}
d\mu=\lambda dS_{F,p,q}(K, \cdot),
\end{equation*}
where $\lambda$ is a positive constant depending on $p$, $q$, $n$ and $\mu$.
\end{theorem}

\begin{proof} 
In accordance with the proof detailed in \cite[Theorem 8.2.2]{SRA2014}, given a finite Borel measure $\mu$ on $\mathbb{S}^{n-1}$ that is not concentrated within any closed hemisphere, it is feasible to construct a sequence of finite discrete measures $\{\mu_k\}$ on $\mathbb{S}^{n-1}$ such that $\mu_k(\mathbb{S}^{n-1}) = \mu(\mathbb{S}^{n-1})$ and $\mu_k$ converges weakly to $\mu$ as $k$ approaches infinity. Specifically, for sufficiently large values of $k$, $\mu_k$ will also not be concentrated in any closed hemisphere. According to Lemma \ref{lem:k98ct}, for each $\mu_k$, there exists a function $h_k \in C^{+}(\mathbb{S}^{n-1})$ and a positive constant $\lambda_k$, satisfying the conditions that  
\begin{align}\label{eq:lxsscg}
\mu_k=\lambda_k S_{F,p,q}([h_k]_{\mu_k}, \cdot),
\end{align}
where
\begin{align}\label{eq:lx66g}
\lambda_k=\frac{p-1}{n(p-1)+p}\int_{\mathbb{S}^{n-1}} h_k^q(u) d \mu_k(u).
\end{align}
Moreover, $h_k$ satisfies that $\xi_{h_k}=o, \tau_{F,p}([h_k]_{\mu_k})=1$ and
$$
\Phi_{h_k, \mu_k}(o)=\inf \left\{\sup _{\xi \in[f]_{\mu_k}} \Phi_{f, \mu_k}(\xi): f \in C^{+}\left(\mathbb{S}^{n-1}\right) \text { and } \tau_{F,p}([f]_{\mu_k})=1\right\},
$$
where
$$
[f]_{\mu_k}=\bigcap_{u \in \operatorname{supp}\left(\mu_k\right)}\left\{\xi \in \mathbb{R}^n: \xi \cdot u \leq f(u)\right\},
$$
and
$$
\Phi_{f, \mu_k}(\xi)=\int_{\mathbb{S}^{n-1}}(f(u)-\xi \cdot u)^q d \mu_k(u)=\int_{\operatorname{supp}\left(\mu_k\right)}(f(u)-\xi \cdot u)^q d \mu_k(u).
$$

The subsequent proof closely mirrors the approach outlined in \cite{JL2019,FZH2020}. Nonetheless, in the interest of ensuring a comprehensive and rigorous demonstration, we provide a detailed proof process. 

Define $g_k = \Phi_{h_k, \mu_k}(o)$. We aim to demonstrate that $g_k$ is uniformly bounded. Regarding the Aleksandrov body corresponding to $\left(1, \operatorname{supp}\left(\mu_k\right)\right)$, we denote it by $[1]_{\mu_k}$. Let  
$$
\bar{f}_k=(\tau_{F,p}([1]_{\mu_k}))^{-\frac{p-1}{n(p-1)+p}}.
$$
We observe that $\left[\bar{f}_k\right]_{\mu_k} = \bar{f}_k[1]_{\mu_k}$. It follows directly that $\tau_{F,p}([\bar{f}_k]_{\mu_k})=1$. Given that $\mu_k\left(\mathbb{S}^{n-1}\right) = \mu\left(\mathbb{S}^{n-1}\right)$, we get
\begin{align}\label{Eq:gdeyzj}
\nonumber g_k & =\Phi_{h_k, \mu_k}(o) \\
\nonumber & \leq \sup _{\xi \in\left[\bar{f}_k\right]_{\mu_k}} \int_{\mathbb{S}^{n-1}}\left(\bar{f}_k(u)-\xi \cdot u\right)^q d \mu_k(u) \\
\nonumber & \leq \int_{\mathbb{S}^{n-1}} {\rm diam}([\bar{f}_k]_{\mu_k})^q d \mu_k(u) \\
\nonumber & ={\rm diam}\left(\left[\bar{f}_k\right]_{\mu_k}\right)^q \mu\left(\mathbb{S}^{n-1}\right) \\
& =\bar{f}_k^q {\rm diam}\left([1]_{\mu_k}\right)^q \mu\left(\mathbb{S}^{n-1}\right).
\end{align}
To demonstrate that $g_k$ is uniformly bounded, we will establish that the diameter of $[1]_{\mu_k}$, denoted as ${\rm diam}([1]_{\mu_k})$, is uniformly bounded. If this is not the case, it would be possible to identify a sequence $\left\{\xi_k\right\}$ such that $\xi_k \in[1]_{\mu_k}$ and
$$
\lim _{k \rightarrow+\infty}\left|\xi_k\right|=+\infty.
$$
Let $\bar{\xi}_k=\frac{\xi_k}{\left|\xi_k\right|} \in \mathbb{S}^{n-1}$. By the compactness of $\mathbb{S}^{n-1}$, we may assume, without loss of generality, that
$$
\lim _{k \rightarrow+\infty} \bar{\xi}_k=\bar{\xi} \in \mathbb{S}^{n-1}.
$$
Given that $\operatorname{supp}(\mu)$ is not concentrated in any closed hemisphere, there exists $\bar{x} \in \operatorname{supp}(\mu)$, such that
\begin{align}\label{Eq:yxbm}
\bar{\xi} \cdot \bar{x}>0.
\end{align}
Let $O(\bar{x}) \subset \mathbb{S}^{n-1}$ denote an arbitrary neighborhood of $\bar{x}$. Consequently, it follows that 
$$
\liminf _{k \rightarrow+\infty} \mu_k(O(\bar{x})) \geq \mu(O(\bar{x}))>0.
$$
Observe that for sufficiently large values of $k$,
$$
O(\bar{x}) \bigcap \operatorname{supp}\left(\mu_k\right) \neq \emptyset, \text{ for infinitely many } k,
$$
which allows us to construct a sequence $\left\{\bar{x}_{k_l}\right\}$ such that
$$
\bar{x}_{k_l} \in \operatorname{supp}\left(\mu_{k_l}\right) \text { and } \lim _{i \rightarrow+\infty} \bar{x}_{k_l}=\bar{x}.
$$
It is important to note that $\xi_{k_l} \in[1]_{\mu_{k_l}}$. Consequently,
$$
\xi_{k_l} \cdot \bar{x}_{k_l} \leq h_{[1]_{\mu_{k_l}}}\left(\bar{x}_{k_l}\right) \leq 1,
$$
which implies that,
$$
\bar{\xi}_{k_l} \cdot \bar{x}_{k_l} \leq \frac{1}{\left|\xi_{k_l}\right|}.
$$
By taking the limit, we obtain
$$
\bar{\xi} \cdot \bar{x} \leq 0,
$$
which contradicts (\ref{Eq:yxbm}). Thus there exists a positive constant  $C>0$, such that for all $k \in \mathbb{N}$,
\begin{align}\label{Eq:idajs}
{\rm diam}\left([1]_{\mu_k}\right) \leq C.
\end{align}
Given that $\mathbb{B}^n \subset[1]_{\mu_k}$ for each $k \in \mathbb{N}$, it follows that for $1<p<\infty$, we have
\begin{align}\label{Eq:qidajs}
\bar{f}_k \leq(\tau_{F,p}(\mathbb{B}^n))^{-\frac{p-1}{n(p-1)+p}}.
\end{align}
From (\ref{Eq:gdeyzj}), (\ref{Eq:idajs}) and (\ref{Eq:qidajs}), it can be deduced that for $0<q<1$ and all $k \in \mathbb{N}$, 
\begin{align}\label{Eq:q66js}
g_k \leq C^q \tau_{F,p}(\mathbb{B}^n))^{-\frac{q(p-1)}{n(p-1)+p}} \mu\left(\mathbb{S}^{n-1}\right).
\end{align}
Thus the sequence $g_k$ is uniformly bounded.

Now, we shall prove that the sequence $\left\{h_k\right\}$ is uniformly bounded on $\mathbb{S}^{n-1}$. Otherwise, there exists a subsequence $\left\{h_{k_l}\right\} \subset \left\{h_k\right\}$ such that
$$
\lim _{l \rightarrow+\infty} \sup _{u \in \mathbb{S}^{n-1}} h_{k_l}(u)=+\infty.
$$
Let $R_{k_l}=\sup _{u \in \mathbb{S}^{n-1}} h_{k_l}(u)=h_{k_l}\left(u_{k_l}\right)$. Since $\left\{u_{k_l}\right\} \subset \mathbb{S}^{n-1}$, from the compactness of $\mathbb{S}^{n-1}$, without loss of generality we can assume
$$
\lim _{l \rightarrow+\infty} u_{k_l}=u_0 \in \mathbb{S}^{n-1}.
$$
Since $\operatorname{supp}(\mu)$ is not concentrated in any closed hemisphere, there exists $v_0 \in \operatorname{supp}(\mu)$ such that
$$
v_0 \cdot u_0>0.
$$
Let $O\left(v_0\right)$ be a small neighborhood of $v_0$ such that for all $u \in O\left(v_0\right)$, we have that
$$
u \cdot u_0>0.
$$
Let $\gamma=\frac{1}{2}\left(u \cdot u_0\right)>0$ for $u \in O\left(v_0\right)$, noting that $R_{k_l} u_{k_l} \in [h_{k_l}]_{\mu_{k_l}}$. For sufficiently large $l$, it follows that for all $u \in O(v_0)$,
$$
u \cdot u_{k_l}>\gamma,
$$
and
$$
\mu_{k_l}\left(O\left(v_0\right)\right) \geq \mu\left(O\left(v_0\right)\right)>0.
$$
Thus
$$
h_{k_l}(u) \geq R_{k_l}\left(u \cdot u_{k_l}\right)>R_{k_l} \gamma.
$$
Therefore when $l$ is sufficiently large, we obtain
$$
\begin{aligned}
g_{k_l} & =\int_{\mathbb{S}^{n-1}} h_{k_l}^q(u) d \mu_{k_l}(u) \\
& \geq \int_{O\left(v_0\right)} h_{k_l}^q(u) d \mu_{k_l}(u) \\
& >R_{k_l}^q \int_{O\left(v_0\right)} \gamma^q d \mu_{k_l}(u) \\
& \geq R_{k_l}^q \int_{O\left(v_0\right)} \gamma^q d \mu(u),
\end{aligned}
$$
which implies that $g_{k_l} \rightarrow+\infty$ as $l \rightarrow+\infty$. This contradicts (\ref{Eq:q66js}). That is, $\left\{h_k\right\}$ is uniformly bounded on $\mathbb{S}^{n-1}$.

Since $\{h_k\}$ is the support function of $\{[h_k]_\mu\}$, thus $\{[h_k]_\mu\}$ is also uniformly bounded. By the Blaschke Selection Theorem \cite[Theorem 1.8.7]{SRA2014}, there is a subsequence of $\{[h_k]_\mu\}$, also written as $\{[h_k]_\mu\}$, converges to a compact convex set $[h]_\mu$ in $\mathbb{R}^n$. 

The continuity of $\tau_{F,p}$ assures that $\tau_{F,p}([h]_\mu)=1$. 
 From (\ref{Eq:sjgj}), we have 
\begin{align*}
1=\tau_{F,p}([h]_\mu) \leq\frac{p-1}{n(p-1)+p}n^{-\frac{1}{p-1}}\kappa_n^{-\frac{p}{n(p-1)}}V([h]_\mu)^{\frac{n(p-1)+p}{n(p-1)}}.
\end{align*}
Consequently, we have,
\begin{align*} 
V([h]_\mu)\geq\left(\frac{n(p-1)+p}{p-1}n^{\frac{1}{p-1}}\kappa_n^{\frac{p}{n(p-1)}}\right)^{\frac{n(p-1)}{n(p-1)+p}}>0.
\end{align*}
This implies that $[h]_\mu$ is a convex body in $\mathbb{R}^n$.

Thus $[h_k]_\mu\rightarrow [h]_\mu\in\mathscr{K}^n_0$ as $k \rightarrow+\infty$ and $h=h_{[h]_\mu}\geq0$ is the support function of $[h]_\mu$, moreover,
\begin{align*}
\lim _{k \rightarrow+\infty} \lambda_k&=\lim _{k \rightarrow+\infty}\left(\frac{p-1}{n(p-1)+p} \int_{\mathbb{S}^{n-1}} h_k^q(u) d \mu_k(u)\right)\\
&=\frac{p-1}{n(p-1)+p}\int_{\mathbb{S}^{n-1}} h^q(u) d \mu(u)\\&=\lambda \geq 0.
\end{align*}
Thus from (\ref{eq:lxsscg}), (\ref{eq:lx66g}) and (\ref{EQ:LQRSN}), we obtain
$$
d\mu=\lambda dS_{F,p,q}\left([h]_\mu, \cdot\right).
$$
Obviously, $\lambda\neq0$, then $\lambda>0$, let $K=[h]_\mu$, we have
$$
d\mu=\lambda d S_{F,p,q}(K, \cdot),
$$
where $\lambda$ is a positive constant depending on $p$, $q$, $n$ and $\mu$, this yields the desired result.
\end{proof}
\end{sloppypar}

\end{document}